\def\thesection{\arabic{section}}
\def\theequation{\thesection.\arabic{equation}}
\newcommand{\ds} {\displaystyle}
\newcommand{\e}{\varepsilon}
\newcommand{\pa} {\partial}
\newcommand{\al} {\alpha}
\newcommand{\de} {\delta}
\newcommand{\Om} {\Omega}
\newcommand{\ra} {\rightarrow}
\newcommand{\rp} {\rightharpoonup}
\newcommand{\De} {\Delta}
\newcommand{\la} {\lambda}
\newcommand{\La} {\Lambda}
\newcommand{\noi} {\noindent}
\newcommand{\na} {\nabla}
\newcommand{\ld} {\langle}
\newcommand{\rd} {\rangle}
\def\theequation{\@arabic{\c@section}.\@arabic{\c@equation}}
\def\QED{\hfill {$\square$}\goodbreak \medskip}
\newtheorem{Theorem}{Theorem}[section]
\newtheorem{Lemma}[Theorem]{Lemma}
\newtheorem{Proposition}[Theorem]{Proposition}
\newtheorem{Definition}[Theorem]{Definition}
\begin{document}
\vspace{0.01in}

\title
{
	The effect of topology  on the number of positive solutions of  elliptic equation  involving Hardy-Littlewood-Sobolev critical exponent  }

\author{ {\bf Divya Goel\footnote{e-mail: divyagoel2511@gmail.com}} \\ Department of Mathematics,\\ Indian Institute of Technology Delhi,\\
	Hauz Khaz, New Delhi-110016, India. }

\date{}

\maketitle

\begin{abstract}
In this article we are concern for the  following Choquard equation
\[
-\Delta u =  \la|u|^{q-2}u +\left(\int_{\Omega}\frac{|u(y)|^{2^*_{\mu}}}{|x-y|^{\mu}}dy\right)|u|^{2^*_{\mu}-2}u  \; \text{in}\;
\Omega,\quad 
 u = 0 \;    \text{ on } \partial \Omega ,
\]
where $\Omega$ is an open  bounded set  with continuous boundary in $\mathbb{R}^N( N\geq 3)$,  $2^*_{\mu}=\frac{2N-\mu}{N-2}$ and $q \in [2,2^*)$ where $2^*=\frac{2N}{N-2}$. Using Lusternik-Schnirelman theory, we associate  the  number of positive solutions of the above problem with the topology of $\Omega$.  Indeed, we prove if $\la< \la_1$ then  problem has  $\text{cat}_{\Omega}(\Omega)$ positive solutions whenever 
$q \in [2,2^*)$ and $N>3  $ or $4<q<6 $ and $N=3$.
\medskip
  
\noi \textbf{Key words:} Choquard equation, critical exponent, Lusternik-Schnirelman theory. 

\medskip

\noi \textit{2010 Mathematics Subject Classification: 49J35, 35A15, 35J60.} 

\end{abstract}
\section{Introduction}

The purpose of this article is to study the existence and multiplicity of solution  of the following Choquard equation 
\begin{equation*}
(P_\la)\;
\left\{\begin{array}{rllll}
-\De u
&=\la|u|^{q-2}u+ \left(\ds \int_{\Om}\frac{|u(y)|^{2^*_{\mu}}}{|x-y|^{\mu}}dy\right)|u|^{2^*_{\mu}-2}u  \; \text{in}\;
\Om,\\
u&=0 \; \text{ on } \pa \Om,
\end{array}
\right.
\end{equation*}
where $\Om$ is an open  bounded set  with continuous boundary in $\mathbb{R}^N( N\geq 3)$,  $2^*_{\mu}=\frac{2N-\mu}{N-2}$ and $q \in [2,2^*)$ where $2^*=\frac{2N}{N-2}$.\\

It is not unfamiliar that nonlinear analysis fascinates many researchers. In particular, the study of elliptic equations is more attractive both for theoretical pde's and real-world applications. 
There is an ample amount of literature regarding the existence and multiplicity of solutions of the following equation: 
\begin{equation}\label{lu13}
-\De u = \la |u|^{q-2}u+ |u|^{2^*-2}u  \text{ in } \Om, \quad u=0 \text{ on } \pa \Om. 
\end{equation}

\noi In the pioneering work of Brezis and Nirenberg \cite{brezis}, authors studied the problem \eqref{lu13} with $q=2$ for the existence of a nontrivial solution. Then many researchers studied the elliptic equations involving Sobolev critical exponent in bounded and unbounded domains.   In \cite{bahri},  Bahri and Coron studied the problem \eqref{lu13} in case of  $\la=0$  and proved the existence of a positive solution when $\Om$ is not a contractible domain using homology theory. Subsequently,  Rey  \cite{rey}
studied  critical elliptic problem  \eqref{lu13} for $q=2$ and proved   that  there exist at least $\text{cat}_{\Om}(\Om)$ solutions in $H_0^1(\Om)$ whenever $\la$ is sufficiently small.  We cite  \cite{benci1, benci, dancer, alves, yin} for existence and multiplicity of solutions of elliptic problems using variational methods, with no attempt to provide the complete list. In the framework of fractional Laplacian,  the effect of topology on the number of solutions of problems was discussed in \cite{figueiredo, figueiredo1} and references therein. \\
\noi Currently, nonlocal equations  appealed a  substantial number of researchers, especially the  Choquard equations.  The work on Choquard equations was started with the  quantum theory of a polaron model given by S. Pekar \cite{pekar} in 1954.  After that in 1976, in the modeling of a one component plasma,  P. Choquard \cite{ehleib} used the following equation with $\mu=1,\; p=2$ and $N=3$:
\begin{equation}\label{lu12}
-\De u +u = \left(\frac{1}{|x|^{\mu}}* |u|^p\right) |u|^{p-2}u \text{ in } \mathbb{R}^N.
\end{equation}
For $\mu=1,\; p=2$ and $N=3$,   Lieb \cite{ehleib}   proved existence, uniqueness of the ground state solution of \eqref{lu12} by using symmetric decreasing rearrangement inequalities. With the help of variational methods, Moroz and Schaftingen \cite{moroz2}  established the existence of  least energy solutions of \eqref{lu12} and prove   properties about the symmetry, regularity, and asymptotic behavior at infinity of the least energy solutions. For interested readers, we refer \cite{alves1, clapp1, clapp,  moroz3}  and references therein for the work on Choquard equations. \\ 

\noi The Hardy-Littlewood-Sobolev inequality  \eqref{co9} plays a significant role in the variational formulation of  Choquard equations. Observe that the integral 
\begin{align*}
\int_{\Om}\int_{\Om}\frac{|u(x)|^{q}|u(y)|^{q}}{|x-y|^{\mu}}~dydx
\end{align*}
is well defined if $\frac{2N-\mu}{N}\leq q \leq \frac{2N-\mu}{N-2}=2^*_{\mu} $. Choquard equations  involving Hardy-Littlewood-Sobolev critical exponent(that is, $q = 2^*_{\mu}$) provoke the interest of the mathematical community due to the  lack of compactness in the embedding  $H_0^1(\Om) \ni u \mapsto \frac{|u|^{2^*_{\mu}}|u|^{2^*_{\mu}}}{|x-y|^{\mu}}   \in  L^{1}(\Om\times \Om)$. In  \cite{yangjmaa},  authors used variational methods to prove the existence and multiplicity  of positive solutions for the  critical Choquard problem involving convex and convex-concave type nonlinearities.\\

\noi In this spirit, recently in \cite{choqcoron} Goel, R\u adulescu and Sreenadh,  studied the Coron problem for Choquard equation and  proved the existence of a positive  high energy solution of the following problem  
\begin{equation*}
-\De u = \left(\int_{\Om}\frac{|u(y)|^{2^*_{\mu}}}{|x-y|^{\mu}}dy\right)|u|^{2^*_{\mu}-2}u   \; \text{in}\;
\Om,\quad 
u = 0 \;    \text{ on } \pa \Om,
\end{equation*}
where $\Om$ is a smooth  bounded domain in $\mathbb{R}^N( N\geq 3)$,  $2^*_{\mu}=\frac{2N-\mu}{N-2}$,  $0< \mu <N$ and 
satisfies the following conditions:
There exists constants $0<R_1<R_2<\infty$ such that 
\begin{align*}
\{ x \in \mathbb{R}^N ,\; R_1<|x|<R_2 \} \subset\Om, \qquad
\{ x \in \mathbb{R}^N ,\; |x|<R_1 \} \nsubseteq \overline{\Om}.
\end{align*} 
In \cite{ghimenti} Ghimenti and Pagliardini studied the following slightly subcritical Choquard problem
\begin{equation}\label{lu17}
-\De u -\la u  = \left(\int_{\Om}\frac{|u(y)|^{p_\e}}{|x-y|^{\mu}}dy\right)|u|^{p_\e-2}u   \; \text{in}\;
\Om,\quad 
u = 0 \;    \text{ on } \pa \Om,
\end{equation}
where $\e>0,\; \Om$ is a regular bounded domain of $\mathbb{R}^N,\; \la\geq0$ and $p_\e= 2^*_\mu -\e$. 
Here authors proved that There exists $\overline{\e} >0$  such that for every $\e \in (0, \overline{\e}]$, Problem \eqref{lu17} has at least $cat_{\Om}(\Om)$ low energy solutions. Moreover, if $\Om$ is not
contractible, there exists another solution with higher energy. \\
Motivated by all these, in this paper, we study the existence of multiple solutions of the problem $(P_\la)$.  Since the geometry of the domain plays an essential role, here  we  proved that the  topology of the domain yields  a lower bound on the number of positive solutions. More precisely, we show that  the problem $(P_\la)$ has at least  $\text{cat}_{\Om}(\Om)$ solutions. Here $\text{cat}_{\Om}(\Om)$ is the  Lusternik-Schirelman category defined as follows
\begin{Definition}
	Let $X$ be  a topological space and $Y$ be a closed set in X Then 
	\begin{align*}
	Cat_X(Y)= \min \bigg\{  k & \in \mathbb{N} \; \bigg|  \text{ there exists closed subsets } Y_1,Y_2,\cdots Y_k \subset X \text{ such that } \\
	& Y_j \text{ is contractible to a point in } X \text{ for all } j \text{ and } \ds \ds \cup_{j=1}^{k} Y_j =X     \bigg\}
	\end{align*}
\end{Definition}
In order to achieve our aim, we used the fact that  Lusternik-Schirelman category is invariant under Nehari manifold. Then using the blowup analysis involving the minimizers and the mountain pass Lemma, we show  the infimum of the functional associated with $(P_\la)$ over the the Nehari Manifold is achieved. Moreover we define the barycenter mapping associated to Choquard nonlinear term and apply the machinery of barycenter mapping to prove our desired conclusion. With this introduction we will state our main result: 

\begin{Theorem}\label{luthm1}
	Let  $\Om$ is an open  bounded set  with continuous boundary in $\mathbb{R}^N( N\geq 3)$ and $q \in [2,2^*)$ then there exists $0<\La^*<\la_1$ such that for all $\la \in (0,\La^*)$ there exists at least 
	$cat_{\Om}(\Om)$ positive solutions of $(P_\la)$ under the following conditions 
	\begin{enumerate}
		\item $q \in [2,2^*)$ and $N> 3$ or 
		\item  $4<q<6 $ and $N=3$.
	\end{enumerate}
\end{Theorem}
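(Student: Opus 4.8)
The plan is to follow the now-classical Benci–Cerami–Rey strategy of comparing the sublevel sets of the energy functional with the topology of $\Om$ via two continuous maps whose composition is homotopic to the identity on $\Om$. Let $J_\la$ be the energy functional associated with $(P_\la)$ on $H_0^1(\Om)$, let $\mc N_\la$ be its Nehari manifold, and let $c_\la=\inf_{\mc N_\la}J_\la$. Write $S_{H,L}$ for the best constant in the Hardy–Littlewood–Sobolev–Sobolev inequality and let $c_\infty=\frac{N+2-\mu}{2(2N-\mu)}S_{H,L}^{\frac{2N-\mu}{N+2-\mu}}$ be the critical energy level for the problem on $\R^N$. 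The first step is a concentration-compactness / Brezis–Nirenberg–type analysis: I would show that $J_\la$ satisfies the Palais–Smale condition on $\mc N_\la$ below the level $2c_\infty$ for $\la\in(0,\la_1)$, and that $c_\la<c_\infty$ for all such $\la$ (here is where the hypotheses $N>3$, or $N=3$ with $4<q<6$, enter, through the familiar test-function estimates: one plugs a truncated Talenti/Moroz–Van Schaftingen bubble concentrated at an interior point into $J_\la$ and checks that the $\la|u|^{q-2}u$ term lowers the energy strictly below $c_\infty$, which requires the exponent $q$ and dimension $N$ to be in the stated range). This yields, for $\la$ small, a minimizer of $J_\la$ on $\mc N_\la$ and hence at least one positive solution.

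Next I would introduce two maps. Fix $r>0$ so small that $\Om_r^-=\{x\in\Om:\operatorname{dist}(x,\pa\Om)>r\}$ is a deformation retract of $\Om$ and $\Om$ is contained in an $r$-neighbourhood $\Om_r^+$ of itself; both $\Om_r^-\hookrightarrow\Om$ and $\Om\hookrightarrow\Om_r^+$ induce the same category, $\operatorname{cat}_{\Om_r^+}(\Om_r^-)=\operatorname{cat}_\Om(\Om)$. For the first map $\Phi_\la\colon\Om_r^-\to\mc N_\la$, I project a suitably truncated concentrated bubble centred at $y\in\Om_r^-$ onto $\mc N_\la$; the energy estimates of the previous paragraph, made uniform in $y$, give $J_\la(\Phi_\la(y))<c_\infty$ uniformly, so $\Phi_\la$ takes values in the sublevel set $\mc N_\la^{c_\infty-\delta}:=\{u\in\mc N_\la:J_\la(u)<c_\infty-\delta\}$ for some $\delta>0$. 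For the second map I use a \emph{barycenter} adapted to the Choquard term: for $u\in\mc N_\la^{c_\infty-\delta}$ set
\[
\beta(u)=\frac{\ds\int_{\Om}\Big(\int_{\Om}\frac{|u(y)|^{2^*_\mu}}{|x-y|^{\mu}}\,dy\Big)|u(x)|^{2^*_\mu}\,x\,dx}{\ds\int_{\Om}\Big(\int_{\Om}\frac{|u(y)|^{2^*_\mu}}{|x-y|^{\mu}}\,dy\Big)|u(x)|^{2^*_\mu}\,dx}.
\]
The key lemma here is that functions in $\mc N_\la^{c_\infty-\delta}$ cannot spread their nonlocal mass over two separated regions (the HLS-critical splitting would cost energy $2c_\infty$), so for $\la$ small and $\delta$ appropriately chosen $\beta(u)\in\Om_r^+$, and $\beta\circ\Phi_\la$ is homotopic to the inclusion $\Om_r^-\hookrightarrow\Om_r^+$. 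A standard Lusternik–Schnirelman argument then forces $\operatorname{cat}(\mc N_\la^{c_\infty-\delta})\geq\operatorname{cat}_{\Om_r^+}(\Om_r^-)=\operatorname{cat}_\Om(\Om)$, and combined with PS below $2c_\infty$ this produces at least $\operatorname{cat}_\Om(\Om)$ critical points of $J_\la$ on $\mc N_\la$; working with $|u|$ (or arguing with the positive cone) makes them positive.

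The main obstacle I anticipate is the compactness threshold analysis for the Choquard nonlinearity: unlike the local critical term, the nonlocal term $\big(\int_\Om |u(y)|^{2^*_\mu}|x-y|^{-\mu}dy\big)|u|^{2^*_\mu-2}u$ does not split additively under weak convergence in an obvious way, so establishing that energy concentrates in quanta of size $c_\infty$ (and hence that PS fails only at $c_\infty,2c_\infty,\dots$) requires a careful Brezis–Lieb–type lemma for the double integral together with a nonlocal concentration-compactness principle. Everything else — the test-function computations that pin down the dimensional/exponent restrictions, the continuity and homotopy properties of $\Phi_\la$ and $\beta$, and the final category count — is then routine, though the uniform-in-$y$ energy estimate for $\Phi_\la$ and the "no dichotomy" estimate controlling $\beta$ must both be carried out with the HLS inequality in place of the ordinary Sobolev inequality.
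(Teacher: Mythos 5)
Your strategy is essentially the one the paper follows: Palais--Smale analysis below the threshold $\frac{N-\mu+2}{2(2N-\mu)}S_{H,L}^{\frac{2N-\mu}{N-\mu+2}}$, test-function estimates that pin down the $(N,q)$ restrictions, a map from an interior neighbourhood $\Om_\de^-$ into a low sublevel set of the Nehari manifold, the nonlocal barycenter $\beta$, and a Lusternik--Schnirelman count. Two implementation differences are worth noting. First, for the map into the Nehari manifold the paper does not project truncated bubbles: it solves the minimization problem on a small ball $B_\de\subset\Om$ in the radial class, obtaining $u_\la^{B_\de}\in N_\la^{B_\de}$ with energy strictly below the threshold, and sets $\phi_\la(z)=u_\la^{B_\de}(\cdot-z)$; this lands in $N_\la^{\Om}$ automatically and, by radial symmetry, gives $\beta\circ\phi_\la=\mathrm{id}$ exactly rather than only up to homotopy, so no uniform-in-$y$ bubble estimate is needed. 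Second, you assert the Palais--Smale condition below \emph{twice} the threshold; that would require a full energy-quantization argument, which is delicate for the nonlocal term and is not carried out in the paper --- but it is also unnecessary, since the sublevel set you use lies below the first threshold and the paper's PS condition up to that level (via a Brezis--Lieb lemma for the double integral) suffices. Finally, your \emph{no dichotomy} justification of $\beta(u)\in\Om_\de^+$ is implemented in the paper as a contradiction argument with $\la_n\to 0$: after a Nehari-type renormalization $t_nu_n$ one obtains an almost-minimizing sequence for $S_{H,L}$, and a nonlocal concentration-compactness proposition (proved in the appendix) shows it concentrates at a single point of $\overline{\Om}$, which forces the barycenter into $\Om_\de^+$ and is where the smallness condition $\la<\Upsilon^*$ originates.
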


Turning to layout of the article: In Section 2, we give the variational framework and preliminary results. In Section 3, we give the Palais-Smale analysis and existence of a solution of $(P_\la)$. In Section 4, we prove some technical Lemmas and proof  Theorem \ref{luthm1}.  Finally in the appendix, we study  the behavior of  optimizing sequence  of the best constant  $S_{H,L}$ defined  in \eqref{nh5}.

\section{Variational framework and Preliminary results}
To study the problem $(P_\la)$ by variational approach we will start with the stating the celebrated Hardy-Littlewood-Sobolev inequality. 
\begin{Proposition}\cite{leib}(\textbf{Hardy-Littlewood-Sobolev Inequality})
	Let $t,r>1$ and $0<\mu <N$ with $1/t+\mu/N+1/r=2$, $f\in L^t(\mathbb{R}^N)$ and $h\in L^r(\mathbb{R}^N)$. There exists a sharp constant $C(t,r,\mu,N)$ independent of $f,h$, such that 
	\begin{equation}\label{co9}
	\int_{\mathbb{R}^N}\int_{\mathbb{R}^N}\frac{f(x)h(y)}{|x-y|^{\mu}}~dydx \leq C(t,r,\mu,N) \|f\|_{L^t}\|h\|_{L^r}.
	\end{equation}
	If $t=r=2N/(2N-\mu)$, then 
	\begin{align*}
	C(t,r,\mu,N)=C(N,\mu)= \pi^{\frac{\mu}{2}}\frac{\Gamma(\frac{N}{2}-\frac{\mu}{2})}{\Gamma(N-\frac{\mu}{2})}\left\lbrace \frac{\Gamma(\frac{N}{2})}{\Gamma(\frac{\mu}{2})}\right\rbrace^{-1+\frac{\mu}{N}}.
	\end{align*}
	Equality holds in  \eqref{co9} if and only if $f\equiv (constant)h$ and 
	\begin{align*}
	h(x)= A(\gamma^2+|x-a|^2)^{(2N-\mu)/2},
	\end{align*} 
	for some $A\in \mathbb{C}, 0\neq \gamma \in \mathbb{R}$ and $a \in \mathbb{R}^N$. \QED
\end{Proposition}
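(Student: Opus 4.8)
The plan is to prove the statement in two stages: establish the bilinear estimate \eqref{co9} with \emph{some} finite constant, and then, in the diagonal conformal case $t=r=2N/(2N-\mu)$, identify the sharp constant together with the extremizers.

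For the boundedness I would pass to the linear Riesz-potential formulation. By duality the bilinear estimate is equivalent to the boundedness of $If(x)=\int_{\mathbb{R}^N}|x-y|^{-\mu}f(y)\,dy$ from $L^t(\mathbb{R}^N)$ into $L^{r'}(\mathbb{R}^N)$, the balance condition $1/t+\mu/N+1/r=2$ being exactly $1/r'=1/t-(N-\mu)/N$. Here one checks that $1/r'>0$ forces $t<N/(N-\mu)$, equivalently $\mu t'>N$. I would then use Hedberg's pointwise trick: splitting the defining integral at radius $R$, the near part is bounded by $CR^{N-\mu}Mf(x)$ (via dyadic decomposition of the annuli, with $Mf$ the Hardy--Littlewood maximal function) and the far part by $CR^{N/t'-\mu}\|f\|_{L^t}$ through H\"older's inequality (legitimate since $\mu t'>N$). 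Optimizing over $R$ gives $If(x)\le C\,(Mf(x))^{\theta}\|f\|_{L^t}^{1-\theta}$ with $\theta=t/r'\in(0,1)$; raising to the power $r'$, integrating, and invoking the maximal theorem $\|Mf\|_{L^t}\le C\|f\|_{L^t}$ (note $\theta r'=t$) yields $\|If\|_{L^{r'}}\le C\|f\|_{L^t}$, hence \eqref{co9} with a finite, non-sharp constant.

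For the sharp constant I would follow the rearrangement--conformal argument. Riesz's rearrangement inequality shows that passing from $f,h$ to their symmetric-decreasing rearrangements $f^*,h^*$ does not decrease the left-hand side while preserving the norms, so any extremizer may be taken nonnegative, radial and decreasing. In the diagonal case the normalized functional is invariant under the full conformal group of $\mathbb{R}^N\cup\{\infty\}$; via stereographic projection I would transport the problem to $S^N$, where this symmetry becomes the rotation group. A competing-symmetries argument (alternating the rearrangement with a fixed conformal map and iterating) drives a maximizing sequence toward a function that is simultaneously radial and invariant under the transported symmetry, forcing it to be constant on $S^N$. Pulling the constant back through the stereographic projection yields precisely the family $h(x)=A(\gamma^2+|x-a|^2)^{-(2N-\mu)/2}$, and evaluating the functional on this explicit profile is a computable Gamma-function integral that returns the stated value of $C(N,\mu)$.

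The main obstacle is this second stage, not the boundedness. The diagonal functional is invariant under the noncompact conformal group (translations, dilations, inversions), so a maximizing sequence may concentrate or escape to infinity and naive variational compactness fails; the competing-symmetries method (equivalently, a concentration-compactness analysis adapted to the conformal symmetry) is exactly what restores compactness and pins the optimizer down. Establishing that the rearrangement step is \emph{strict} unless $f$ and $h$ are already radially symmetric translates of one another is what upgrades existence of a maximizer to the full equality characterization, and is the technical heart of the theorem. As this is the classical result of Lieb, I would ultimately cite \cite{leib} for the details of this stage.
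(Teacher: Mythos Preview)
Your sketch is correct, but there is nothing in the paper to compare it against: the proposition is stated with the citation \cite{leib} and closed immediately with a \QED\ symbol, i.e.\ the paper quotes the Hardy--Littlewood--Sobolev inequality as a known background result and gives no argument of its own. Your outline---Hedberg's pointwise maximal-function bound for the (non-sharp) boundedness, followed by Riesz rearrangement plus the competing-symmetries/conformal argument on $S^N$ for the sharp diagonal constant and the characterization of extremizers---is exactly the classical route recorded in Lieb--Loss \cite{leib} and Carlen--Loss, so your plan and the paper's citation point to the same source. Two minor remarks: (i) the exponent in the extremizer as printed in the statement should carry a minus sign, $h(x)=A(\gamma^2+|x-a|^2)^{-(2N-\mu)/2}$, which you already wrote correctly; (ii) in your Hedberg step the maximal theorem requires $t>1$, which is part of the hypotheses, and the integrability condition $\mu t'>N$ that you invoke for the far part is indeed equivalent to $1/r'>0$, hence to $r>1$, so no extra assumption is hidden there.
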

The Sobolev space $D^{1,2}(\mathbb{R}^N)$ is defined as 
\begin{align*}
D^{1,2}(\mathbb{R}^N)= \bigg\{ u \in L^{2^*}(\mathbb{R}^N)\;:\; \na u \in L^2(\mathbb{R}^N,\mathbb{R}^N)   \bigg\},
\end{align*}
endowed with the norm 
\begin{align*}
\|u\|= \left(\int_{\mathbb{R}^N} |\na u|^2~dx\right)^{\frac{1}{2}}. 
\end{align*}

\noi The best constant for the embedding $D^{1,2}(\mathbb{R}^N)$ into  $L^{2^*}(\mathbb{R}^N)$ (where  $2^*= \frac{2N}{N-2}$ )is defined as 
\begin{align*}
S=\inf_{ u \in D^{1,2}(\mathbb{R}^N) \setminus \{0\}} \left\{ \int_{\mathbb{R}^N} |\na u|^2 dx:\; \int_{\mathbb{R}^N}|u|^{2^{*}}dx=1\right\}.
\end{align*}
Consequently, we define 
\begin{equation}\label{nh5}
S_{H,L}= \inf_{ u \in D^{1,2}(\mathbb{R}^N)\setminus \{0\}} \left\{  \int_{\mathbb{R}^N}|\nabla u|^2 dx:\;  \int_{\mathbb{R}^{N}} \int_{\mathbb{R}^N } \frac{|u(x)|^{2^{*}_{\mu}}|u(y)|^{2^{*}_{\mu}}}{|x-y|^{\mu}}~dxdy=1 \right\}.
\end{equation}

\begin{Lemma}\label{lulem13}
	\cite{yang}
	The constant $S_{H,L}$ defined in \eqref{nh5} is achieved if  and only if 
	\begin{align*}
	u=C\left(\frac{b}{b^2+|x-a|^2}\right)^{\frac{N-2}{2}}
	\end{align*} 
	where $C>0$ is a fixed constant , $a\in \mathbb{R}^N$ and $b\in (0,\infty)$ are parameters. Moreover,
	\begin{align*}
	S=	S_{H,L} \left(C(N,\mu)\right)^{\frac{N-2}{2N -\mu}}.
	\end{align*}
\end{Lemma}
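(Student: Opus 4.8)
The plan is to tie $S_{H,L}$ to the classical Sobolev constant $S$ through the Hardy--Littlewood--Sobolev inequality \eqref{co9}, and then to read off both the value of the constant and the shape of the extremals from the equality cases of the two inequalities that appear. By homogeneity of the Dirichlet integral and of the two constraints, I would first rewrite both numbers as Rayleigh quotients,
\begin{equation*}
S=\inf_{u\neq 0}\frac{\int_{\mathbb{R}^N}|\na u|^2\,dx}{\left(\int_{\mathbb{R}^N}|u|^{2^*}\,dx\right)^{2/2^*}},\qquad
S_{H,L}=\inf_{u\neq 0}\frac{\int_{\mathbb{R}^N}|\na u|^2\,dx}{\left(\int_{\mathbb{R}^N}\int_{\mathbb{R}^N}\frac{|u(x)|^{2^*_{\mu}}|u(y)|^{2^*_{\mu}}}{|x-y|^{\mu}}\,dx\,dy\right)^{1/2^*_{\mu}}}.
\end{equation*}
Applying \eqref{co9} with $t=r=\frac{2N}{2N-\mu}$ to $f=h=|u|^{2^*_{\mu}}$ and noting $2^*_{\mu}\cdot t=2^*$ gives the Choquard term a bound by a power of the $L^{2^*}$-norm, namely $\iint\frac{|u(x)|^{2^*_{\mu}}|u(y)|^{2^*_{\mu}}}{|x-y|^{\mu}}\le C(N,\mu)\big(\int|u|^{2^*}\big)^{(2N-\mu)/N}$. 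Raising this to the power $1/2^*_{\mu}=\frac{N-2}{2N-\mu}$ and carrying out the exponent bookkeeping (which collapses to $2/2^*$) yields $\big(\iint\cdots\big)^{1/2^*_{\mu}}\le \big(C(N,\mu)\big)^{\frac{N-2}{2N-\mu}}\big(\int|u|^{2^*}\big)^{2/2^*}$, with equality exactly when the HLS inequality is saturated.

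Combining this with the Sobolev inequality $\int|\na u|^2\ge S\big(\int|u|^{2^*}\big)^{2/2^*}$ gives, for every $u\neq 0$,
\begin{equation*}
\frac{\int_{\mathbb{R}^N}|\na u|^2\,dx}{\left(\iint\frac{|u(x)|^{2^*_{\mu}}|u(y)|^{2^*_{\mu}}}{|x-y|^{\mu}}\right)^{1/2^*_{\mu}}}
\;\ge\;\frac{S}{\big(C(N,\mu)\big)^{\frac{N-2}{2N-\mu}}},
\end{equation*}
and hence $S_{H,L}\ge S\,\big(C(N,\mu)\big)^{-\frac{N-2}{2N-\mu}}$. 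To get the reverse inequality I would test the quotient on an Aubin--Talenti bubble $U(x)=C\big(b/(b^2+|x-a|^2)\big)^{(N-2)/2}$: such $U$ saturates the Sobolev inequality (the classical Talenti result), so the first estimate above is an equality for $U$; moreover $|U|^{2^*_{\mu}}\propto(b^2+|x-a|^2)^{-(2N-\mu)/2}$ is precisely the extremal profile in the equality case of \eqref{co9} with $f=h$, so the HLS estimate is also an equality for $U$. Thus the whole chain is an equality on $U$, which simultaneously proves $S=S_{H,L}\big(C(N,\mu)\big)^{\frac{N-2}{2N-\mu}}$ and shows that every bubble $U$ is a minimizer for $S_{H,L}$.

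For the converse (``only if'') direction, I would run the chain backwards. If $u$ achieves $S_{H,L}$ then, by the just-proved equality, both inequalities used above must be equalities for $u$: the Sobolev equality forces $u$ to be an Aubin--Talenti extremal, while the HLS equality forces $|u|^{2^*_{\mu}}$ to have the profile $A(\gamma^2+|x-a|^2)^{-(2N-\mu)/2}$; solving for $u$ gives the exponent $-(N-2)/2$, i.e. exactly the stated bubble form. The two conditions are consistent precisely because the Sobolev and HLS extremal families coincide, which is what makes the characterization clean. The main obstacle is exactly this compatibility: one must verify that the exponent arithmetic is exact so that a minimizer of $S_{H,L}$ is compelled to saturate \emph{both} the Sobolev and the Hardy--Littlewood--Sobolev inequalities, and that the resulting two optimal profiles are one and the same up to the scaling and translation parameters $C,a,b$; the remaining computations (constants and exponents) are routine.
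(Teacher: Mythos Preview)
The paper does not give its own proof of this lemma; it is stated with the citation \cite{yang} and nothing further. Your argument is correct and is precisely the standard route (the one found in \cite{yang}): bound the Choquard denominator by the $L^{2^*}$-norm via the sharp HLS inequality, compare with the Sobolev quotient to get $S_{H,L}\ge S\,C(N,\mu)^{-\frac{N-2}{2N-\mu}}$, and then verify that the Aubin--Talenti bubbles saturate both HLS and Sobolev simultaneously, which yields the equality $S=S_{H,L}\,C(N,\mu)^{\frac{N-2}{2N-\mu}}$ and the ``if'' direction; the ``only if'' follows since any minimizer of $S_{H,L}$ must then turn both inequalities into equalities, and the known equality cases for Sobolev and HLS coincide on the bubble family. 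One cosmetic point: the equality case of \eqref{co9} as stated in the paper carries the exponent $(2N-\mu)/2$ with a missing minus sign; your computation $|U|^{2^*_\mu}\propto(b^2+|x-a|^2)^{-(2N-\mu)/2}$ uses the correct integrable profile, so this is just a typo in the quoted proposition, not in your argument.
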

\begin{Lemma}\cite{yang}
	For $N\geq 3$ and $0<\mu<N$. Then 
	\begin{align*}
	\|.\|_{NL}:= \left(\ds \int_{\Om}\int_{\Om}\frac{|.|^{2^{*}_{\mu}}|.|^{2^{*}_{\mu}}}{|x-y|^{\mu}}~dydx\right)^{\frac{1}{2\cdot 2^{*}_{\mu}}}
	\end{align*}
	defines a norm on $L^{2^*}(\Om)$, where $\Om$ is an open  bounded set  with continuous boundary in $\mathbb{R}^N$.
\end{Lemma}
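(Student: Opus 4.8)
The plan is to verify directly the four norm axioms for $\|\cdot\|_{NL}$ on $L^{2^*}(\Om)$, extending every $u\in L^{2^*}(\Om)$ by zero outside $\Om$ so that all integrals may be taken over $\R^N$. Throughout write $p:=2^*_\mu$ and set, for nonnegative measurable $f,g$,
\[
B(f,g):=\int_{\R^N}\int_{\R^N}\frac{f(x)g(y)}{|x-y|^{\mu}}\,dy\,dx ,
\]
so that $\|u\|_{NL}=B(|u|^{p},|u|^{p})^{1/(2p)}$. Finiteness is the first point to settle: taking $t=r=\tfrac{2N}{2N-\mu}$ in \eqref{co9} with $f=g=|u|^{p}$, and noting $pt=\tfrac{2N-\mu}{N-2}\cdot\tfrac{2N}{2N-\mu}=2^*$, gives $B(|u|^{p},|u|^{p})\le C(N,\mu)\big(\int_{\R^N}|u|^{2^*}\big)^{2/t}<\infty$, so $\|\cdot\|_{NL}$ is well defined on $L^{2^*}(\Om)$. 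Nonnegativity is clear, and homogeneity $\|\la u\|_{NL}=|\la|\,\|u\|_{NL}$ is immediate since the integrand carries a factor $|\la|^{2p}$. For definiteness, if $\|u\|_{NL}=0$ then the nonnegative integrand vanishes a.e.; since $|x-y|^{-\mu}>0$, Fubini forces $|u(x)|^{p}|u(y)|^{p}=0$ a.e., hence $u=0$ a.e. The only genuine obstacle is the triangle inequality, to which I now turn.

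First I would reduce to nonnegative functions. Because the kernel is positive, $B$ is monotone in each entry, and $|u+v|\le|u|+|v|$ pointwise gives $\|u+v\|_{NL}\le\big\||u|+|v|\big\|_{NL}$, while $\|u\|_{NL}=\||u|\|_{NL}$; so it suffices to prove $\|u+v\|_{NL}\le\|u\|_{NL}+\|v\|_{NL}$ for $u,v\ge0$. The device that makes this tractable is the Riesz composition formula: since $0<\mu<N$, there is a constant $c=c_{N,\mu}>0$ with
\[
\frac{1}{|x-y|^{\mu}}=c\int_{\R^N}\frac{dz}{|x-z|^{(N+\mu)/2}\,|z-y|^{(N+\mu)/2}} ,
\]
the exponents being admissible because $\tfrac{N+\mu}{2}<N$ and $\tfrac{N+\mu}{2}+\tfrac{N+\mu}{2}-N=\mu$. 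Introducing the positive linear operator $(Sf)(z):=\int_{\R^N}|x-z|^{-(N+\mu)/2}f(x)\,dx$ and applying Fubini, this factors the bilinear form as $B(f,g)=c\int_{\R^N}(Sf)(z)(Sg)(z)\,dz$; in particular $\|u\|_{NL}=c^{1/(2p)}\,\|S(u^{p})\|_{L^2}^{1/p}$ for $u\ge0$.

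The crux is then a two-step Minkowski argument. For fixed $z$ I would apply Minkowski's inequality in $L^{p}$ with respect to the measure $d\nu_z(x)=|x-z|^{-(N+\mu)/2}\,dx$ to $w:=u+v$ (legitimate since $p>1$):
\[
\big[S(w^{p})(z)\big]^{1/p}=\|w\|_{L^{p}(\nu_z)}\le\|u\|_{L^{p}(\nu_z)}+\|v\|_{L^{p}(\nu_z)}=\big[S(u^{p})(z)\big]^{1/p}+\big[S(v^{p})(z)\big]^{1/p},
\]
a pointwise inequality in $z$. Writing $R:=[S(w^{p})]^{1/p}$, $P:=[S(u^{p})]^{1/p}$, $Q:=[S(v^{p})]^{1/p}$, this reads $0\le R\le P+Q$, and one checks $\|S(w^{p})\|_{L^2}^{1/p}=\|R\|_{L^{2p}}$ and similarly for $P,Q$. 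Hence, by monotonicity of the $L^{2p}$ norm followed by the ordinary Minkowski inequality in $L^{2p}(\R^N)$,
\[
\|S(w^{p})\|_{L^2}^{1/p}=\|R\|_{L^{2p}}\le\|P+Q\|_{L^{2p}}\le\|P\|_{L^{2p}}+\|Q\|_{L^{2p}}=\|S(u^{p})\|_{L^2}^{1/p}+\|S(v^{p})\|_{L^2}^{1/p}.
\]
Multiplying through by $c^{1/(2p)}$ yields $\|u+v\|_{NL}\le\|u\|_{NL}+\|v\|_{NL}$, completing the verification. The one point deserving real care is the justification of the composition formula and the ensuing factorization — convergence of the $z$-integral and the use of Fubini — which rests on $0<\mu<N$ together with the boundedness $S:L^{2N/(2N-\mu)}\to L^2$ that follows from \eqref{co9}; once that is in place, the two Minkowski steps are routine.
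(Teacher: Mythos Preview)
The paper does not supply a proof of this lemma at all; it merely records the statement and attributes it to \cite{yang}. Your argument is therefore not competing with anything in the present paper, and it is correct as a self-contained verification.

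A couple of remarks on your write-up. The decisive idea you use --- factoring the Riesz kernel via the semigroup identity
\[
|x-y|^{-\mu}=c_{N,\mu}\int_{\R^N}|x-z|^{-(N+\mu)/2}|z-y|^{-(N+\mu)/2}\,dz
\]
so that $B(f,g)=c\,\langle Sf,Sg\rangle_{L^2}$ with $S=I_{(N-\mu)/2}$ --- is exactly the standard route (and is the device behind the argument in \cite{yang}); once $\|u\|_{NL}=c^{1/(2p)}\|S(|u|^{p})\|_{L^2}^{1/p}$, your two applications of Minkowski (pointwise in the weighted measure $d\nu_z$, then in $L^{2p}(\R^N)$) go through because $p=2^*_\mu>1$ and $2p>1$. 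Two small clean-ups: (i) in the factorization step everything is nonnegative, so Tonelli suffices and no integrability hypothesis is needed a priori; the HLS boundedness $S:L^{2N/(2N-\mu)}\to L^2$ is only used to see that $\|u\|_{NL}<\infty$ for $u\in L^{2^*}$; (ii) your definiteness argument is fine, but the phrasing ``Fubini forces'' is slightly off --- the point is simply that a nonnegative integrand with zero integral vanishes a.e., whence $|u|^{p}\otimes|u|^{p}=0$ a.e.\ on $\Om\times\Om$ and so $u=0$ a.e.
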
 
\noi The energy functional associated with $(P_\la)$,  $J_\la: H_0^1(\Om) \ra \mathbb{R}$ is defined by  
\begin{equation*} 
J_\la(u)=\frac{1}{2}\int_{\Om} |\na u|^2~dx -\frac{\la}{q}\int_{\Om}|u|^q~dx- \frac{1}{2\cdot 2^{*}_{\mu}}\int_{\Om}\int_{\Om}\frac{|u(x)|^{2^{*}_{\mu}}|u(y)|^{2^{*}_{\mu}}}{|x-y|^{\mu}}~dydx. 
\end{equation*}
Employing the Hardy-Littlewood-Sobolev inequality \eqref{co9}, we have 
\begin{align*}
\left(\ds \int_{\Om}\int_{\Om}\frac{|u(x)|^{2^{*}_{\mu}}|u(y)|^{2^{*}_{\mu}}}{|x-y|^{\mu}}~dydx\right)^{\frac{1}{2^{*}_{\mu}}}\leq C(N,\mu)^{\frac{2N-\mu}{N-2}}\|u\|_{L^{2^*}}^2. 
\end{align*}
It implies the functional $J_\la \in C^1(H_0^{1}(\Om),\mathbb{R})$.  We know that there exists a one to one correspondence between the critical points of $J_\la$ and solution of $(P_\la)$.\\

\textbf{Notation}
	We  denote  $\la_1$  be the first eigenvalue of $-\De$ with zero Dirichlet boundary data, which is given by 
	\begin{align*}
	\la_1= \inf_{ u \in H_0^1(\Om)\setminus \{0\}}  \bigg \{   \int_{\Om}|\nabla u|^2~dx \bigg| \int_{\Om}| u|^2~dx=1 \bigg\}.
	\end{align*}
We also denote $\mathcal{(Q)}$ as  the  following condition:
	\begin{align*}
	\mathcal{(Q)} \text{ Assume } 0<\la<\la_1. \text{ Moreover, }  q \in [2,2^*) \text{ and }  N> 3 \;\;\text{   OR   } \;\;  4< q<6  \text{ and } N=3.
	\end{align*}
\begin{Lemma}\label{efflem1}
Assume   $N\geq 3$   and 	$   \la \in (0,\la_1)$.  Then	 $J_\la$ satisfies the following conditions:
	\begin{enumerate}
		\item [(i)] There exists $\al,\rho>0$ such that $J_\la(u)\geq \al$ for $\|u\|= \rho$ 	
		\item [(ii)] There exists  $e \in H_0^1(\Om)$ with $\|e\|>\rho$ such that $J_\la(e)<0$.
	\end{enumerate}
\end{Lemma}
\begin{proof}
	(i) Using H\"older's inequality, Sobolev inequality and Hardy-Littlewood-Sobolev inequality, we have 
	\begin{align*}
	J_\la(u)\geq 
	  \left\{
	\begin{array}{ll}
	\frac{1}{2}\left(1-\frac{\la}{\la_1}\right)\|u\|^2-\frac{S_{H,L}^{-1}}{2\cdot 2^*_{\mu}} \|u\|^{2\cdot 2^*_{\mu}}, & \text{ if }  q=2,\\
	\frac{1}{2}\|u\|^2- \frac{\la S^{\frac{-q}{2}} |\Om|^{\frac{2^*-q}{2^*}}}{q}\|u\|^q-\frac{S_{H,L}^{-1}}{2\cdot 2^*_{\mu}} \|u\|^{2\cdot 2^*_{\mu}}, &  \text{ if }  q\in (2,2^*) . \\
	\end{array} 
	\right.
	\end{align*}
	Using the given assumption on $\la$ and the fact that  $2<2\cdot 2^*_{\mu}$, we can choose $\al,\rho>0$ such that $J_\la(u)\geq \al$ whenever $\|u\|= \rho$. \\
	(ii) Let $u \in H_0^1(\Om)$ then 
	\begin{align*}
	J_\la(tu)& = \frac{ t^2}{2}\|u\|^2  - \frac{t^q}{q} \int_{\Om}|u|^q ~dx- \frac{t^{2\cdot 2^*_{\mu}}}{2\cdot 2^*_{\mu}} \int_{\Om}\int_{\Om} \frac{|u(x)|^{2^{*}_{\mu}}|u(y)|^{2^{*}_{\mu}}}{|x-y|^{\mu}}~dxdy \ra -\infty \text{ as } t \ra \infty.
	\end{align*}
	Hence we can choose $t_0>0$ such that $e:= t_0u$ such that (ii) follows. \QED
\end{proof}
The Nehari manifold associated to $J_\la$ defined as 
\begin{align*}
N_\la^{\Om}:=\{ u \in H_0^1(\Om)\setminus \{0\} \;|\;  \ld J_\la^{\prime}(u),u\rd=0 \}.
\end{align*}
\begin{Lemma}\label{lulem12}
	Let $u$ be a critical point on $N_\la^{\Om}$.  Then $u$ is a critical point of $J_\la$ on $H_0^1(\Om)$. 
\end{Lemma}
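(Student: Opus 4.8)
The plan is to realise $N_\la^{\Om}$ as the zero set of the $C^1$ functional
\[
G(u):=\ld J_\la^{\prime}(u),u\rd=\int_{\Om}|\na u|^2~dx-\la\int_{\Om}|u|^q~dx-\int_{\Om}\int_{\Om}\frac{|u(x)|^{2^{*}_{\mu}}|u(y)|^{2^{*}_{\mu}}}{|x-y|^{\mu}}~dydx,
\]
which is of class $C^1$ since $J_\la\in C^1(H_0^1(\Om),\R)$, and then to verify that $0$ is a regular value of $G$ along $N_\la^{\Om}$, so that the Lagrange multiplier rule applies.

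First I would compute, for $u\in N_\la^{\Om}$,
\[
\ld G^{\prime}(u),u\rd=2\int_{\Om}|\na u|^2~dx-q\la\int_{\Om}|u|^q~dx-2\cdot 2^{*}_{\mu}\int_{\Om}\int_{\Om}\frac{|u(x)|^{2^{*}_{\mu}}|u(y)|^{2^{*}_{\mu}}}{|x-y|^{\mu}}~dydx,
\]
and then eliminate the Dirichlet term using the constraint $G(u)=0$, i.e. $\int_{\Om}|\na u|^2=\la\int_{\Om}|u|^q+\int_{\Om}\int_{\Om}(\cdots)$, to obtain
\[
\ld G^{\prime}(u),u\rd=(2-q)\,\la\int_{\Om}|u|^q~dx+(2-2\cdot 2^{*}_{\mu})\int_{\Om}\int_{\Om}\frac{|u(x)|^{2^{*}_{\mu}}|u(y)|^{2^{*}_{\mu}}}{|x-y|^{\mu}}~dydx.
\]
Since $q\geq 2$ and $2^{*}_{\mu}>1$, both coefficients are non-positive, and since $u\neq 0$ forces the double integral to equal $\|u\|_{NL}^{2\cdot 2^{*}_{\mu}}>0$, we conclude $\ld G^{\prime}(u),u\rd<0$; in particular $G^{\prime}(u)\neq 0$ for every $u\in N_\la^{\Om}$. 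Hence $N_\la^{\Om}$ is a $C^1$ Banach submanifold of $H_0^1(\Om)$ of codimension one, and any $u$ that is a critical point of the restriction $J_\la|_{N_\la^{\Om}}$ satisfies $J_\la^{\prime}(u)=\theta\,G^{\prime}(u)$ in $H^{-1}(\Om)$ for some $\theta\in\R$.

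Finally I would pair this identity with $u$ itself: because $u\in N_\la^{\Om}$ we have $\ld J_\la^{\prime}(u),u\rd=G(u)=0$, so $0=\theta\,\ld G^{\prime}(u),u\rd$, and since $\ld G^{\prime}(u),u\rd\neq 0$ this gives $\theta=0$, whence $J_\la^{\prime}(u)=0$, i.e. $u$ is a free critical point of $J_\la$ on $H_0^1(\Om)$. The only step needing real care is the non-degeneracy $\ld G^{\prime}(u),u\rd\neq 0$ — equivalently, that the fibering map $t\mapsto J_\la(tu)$ has a non-degenerate critical point at $t=1$ — and this is exactly where the assumption $q\geq 2$ enters (it keeps the term $(2-q)\la\int_{\Om}|u|^q$ non-positive rather than competing with the critical term); once this is in hand, the remainder is the classical Nehari-manifold argument. \QED
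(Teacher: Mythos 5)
Your proposal is correct and is exactly the standard natural-constraint/fibering argument that the paper itself invokes only by citing Dr\'abek--Pohozaev: the paper's ``proof'' is a one-line reference, and your computation of $\ld G^{\prime}(u),u\rd=(2-q)\la\int_{\Om}|u|^q+(2-2\cdot 2^{*}_{\mu})\|u\|_{NL}^{2\cdot 2^{*}_{\mu}}<0$ (using $q\geq 2$, $\la>0$, $2^{*}_{\mu}>1$ and $u\neq 0$) is precisely the nondegeneracy that makes the Lagrange multiplier vanish. No gaps; your writeup in fact supplies the details the paper omits.
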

\begin{proof}
	  The proof follows from  \cite{dpp}. \QED
	\end{proof}
\begin{Lemma}\label{lulem8} 
	 Assume   	$    \la \in (0,\la_1)$.  Then $	N_\la^\Om \not = \emptyset$ and $J_\la$ is bounded below on $N_\la^\Om$.
\end{Lemma}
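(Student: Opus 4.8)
I would treat the two assertions separately, both via an elementary study of the fibering map $t\mapsto J_\la(tu)$.

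\emph{Non-emptiness of $N_\la^\Om$.} Fix any $u\in H_0^1(\Om)\setminus\{0\}$ and consider $\phi_u(t):=J_\la(tu)$ for $t>0$, whose explicit form was already recorded in the proof of Lemma~\ref{efflem1}(ii) (and in which the double integral equals $\|u\|_{NL}^{2\cdot 2^*_\mu}$). The point $tu$ lies on $N_\la^\Om$ exactly when $\phi_u'(t)=0$, and after dividing by $t>0$ this is equivalent to
\begin{align*}
\|u\|^2\;=\;\la\, t^{\,q-2}\int_{\Om}|u|^q\,dx\;+\;t^{\,2\cdot 2^*_\mu-2}\,\|u\|_{NL}^{2\cdot 2^*_\mu}\;=:\;\psi_u(t).
\end{align*}
If $q\in(2,2^*)$, both exponents $q-2$ and $2\cdot 2^*_\mu-2$ are positive, so $\psi_u$ is continuous and strictly increasing on $(0,\infty)$ from $0$ to $+\infty$; hence $\psi_u(t)=\|u\|^2$ has a (unique) solution $t_u>0$. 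If $q=2$, then $\psi_u(t)=\la\int_\Om|u|^2\,dx+t^{\,2\cdot 2^*_\mu-2}\|u\|_{NL}^{2\cdot 2^*_\mu}$ increases from $\la\int_\Om|u|^2\,dx$ to $+\infty$; since $\la<\la_1$ gives $\la\int_\Om|u|^2\,dx\le(\la/\la_1)\|u\|^2<\|u\|^2$, the equation $\psi_u(t)=\|u\|^2$ again admits a solution $t_u>0$. In both cases $t_uu\in N_\la^\Om$, so $N_\la^\Om\neq\emptyset$.

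\emph{Boundedness from below.} Let $u\in N_\la^\Om$, i.e. $\ld J_\la'(u),u\rd=0$, which reads $\|u\|^2=\la\int_\Om|u|^q\,dx+\|u\|_{NL}^{2\cdot 2^*_\mu}$. Substituting $\la\int_\Om|u|^q\,dx=\|u\|^2-\|u\|_{NL}^{2\cdot 2^*_\mu}$ into $J_\la(u)$ and collecting terms yields
\begin{align*}
J_\la(u)=\left(\frac12-\frac1q\right)\|u\|^2+\left(\frac1q-\frac{1}{2\cdot 2^*_\mu}\right)\|u\|_{NL}^{2\cdot 2^*_\mu}.
\end{align*}
Here $\tfrac12-\tfrac1q\ge 0$ since $q\ge 2$, and $\tfrac1q-\tfrac1{2\cdot 2^*_\mu}>0$ since $q<2^*<2\cdot 2^*_\mu$, the last strict inequality being $2\cdot 2^*_\mu-2^*=\tfrac{2(N-\mu)}{N-2}>0$, valid because $0<\mu<N$. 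Therefore $J_\la(u)\ge 0$ for every $u\in N_\la^\Om$, so $J_\la$ is bounded below on $N_\la^\Om$.

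\emph{Main point.} The argument is just a fibering-map computation, so there is no serious obstacle; the only step requiring attention is the existence of the critical point $t_u$ in the purely quadratic case $q=2$, where the hypothesis $\la<\la_1$ enters precisely to ensure $\|u\|^2-\la\int_\Om|u|^2\,dx\ge(1-\la/\la_1)\|u\|^2>0$, so that the strictly increasing function $\psi_u$ actually attains the level $\|u\|^2$. I would also keep the sharper byproduct $J_\la|_{N_\la^\Om}\ge 0$ (together with $J_\la(u)\ge(\tfrac12-\tfrac1q)\|u\|^2$ when $q>2$ and $J_\la(u)\ge(\tfrac12-\tfrac1{2\cdot 2^*_\mu})\|u\|_{NL}^{2\cdot 2^*_\mu}$ when $q=2$), since these coercivity-type bounds are what the subsequent Palais--Smale and minimization arguments will use.
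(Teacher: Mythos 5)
Your proof is correct and follows essentially the same route as the paper: a fibering-map analysis of $t\mapsto J_\la(tu)$ for non-emptiness, and the Nehari identity $\|u\|^2=\la\int_\Om|u|^q\,dx+\|u\|_{NL}^{2\cdot 2^*_\mu}$ substituted into $J_\la$ for the lower bound (you eliminate $\la\int_\Om|u|^q$ where the paper eliminates $\|u\|^2$, but the resulting nonnegativity is the same). If anything, your treatment is slightly more careful than the paper's in the case $q=2$, where you make explicit that $\la<\la_1$ is what guarantees $\psi_u(0^+)=\la\int_\Om|u|^2\,dx<\|u\|^2$ so that the increasing function $\psi_u$ actually crosses the level $\|u\|^2$.
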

\begin{proof}
	Let $u \in H_0^1(\Om)\setminus \{0\}$.	Consider the function
	\begin{align*}
	\mathcal{\phi}_u(t)= &   J_\la(tu) = \frac{t^2}{2} \|u\|^2 -\frac{\la t^q}{q} \int_{\Om}|u|^q~dx- \frac{t^{2\cdot 2^{*}_{\mu}}}{2\cdot 2^{*}_{\mu}}\|u\|_{NL}^{2\cdot 2^{*}_{\mu}}. 
		\end{align*}
 Then $\phi_u(t)=0,\; \phi_u(t)\ra -\infty$ as $t \ra \infty$. We now show that there exists  unique $t_0>0$  such that $\phi_u^\prime(t_0)=0$ . 
 Since 
 \begin{align*}
 \phi_u^\prime(t)= t\|u\|^2- \la t^{q-1}\int_{\Om}|u|^q~dx-t^{2\cdot 2^{*}_{\mu}-1}\|u\|_{NL}^{2\cdot 2^{*}_{\mu}}= t m_u(t)
 \end{align*}
 	where $m_u(t)= \|u\|^2 -b_u(t)$ and $b_u(t)= \la t^{q-2} \ds\int_{\Om}|u|^q~dx+ t^{2\cdot 2^{*}_{\mu}-2}\|u\|_{NL}^{2\cdot 2^{*}_{\mu}}$.  Observe that $b_u$ is a continuous function, $\ds \lim_{t \ra \infty}b_u(t) = \infty$ and $b_u^\prime(t) >0$ for all $t>0$. Therefore, there exists unique $t_0>0$ such that $b_u(t_0)= \|u\|^2$.  That is, $\phi_u^\prime(t_0)=0$. 
 	It implies $t_0 \phi_u^\prime(t_0)=0$ and $t_0u \in N_\la^{\Om}$.  		It implies $N_\la^{\Om} \not = \emptyset$. Now if $ u \in N_\la^\Om$ then $J_\la(u)$ reduced to 
	\begin{align*}
	J_\la(u)= \left(\frac{1}{2}-\frac{1}{q}\right)\int_{\Om}|u|^q~dx+ \left(\frac{1}{2}-\frac{1}{2\cdot 2^{*}_{\mu}}\right)\|u\|_{NL}^{2\cdot 2^{*}_{\mu}}>0 .
		\end{align*}
Therefore, $\ds \inf_{u \in N_\la^{\Om}} J_\la(u)>0$. That is,  $J_\la$ is bounded below on $N_\la^\Om$.
	\QED
	\end{proof}
Now we set 
\begin{align}\label{lu14}
\theta_\la:= \inf_{u \in N_\la^{\Om}} J_\la(u)\quad \text{and} \quad \widehat{\theta}_\la:= \inf_{ u \in H_0^1(\Om)\setminus \{0\}} \sup_{t\geq 0}J_\la(tu),
\end{align}
where $\widehat{\theta}_\la$ denote the Mountain Pass (MP, in short) level.
\section{The Palais-Smale condition and estimates of the functional }
In this section we will give the  Palais--Smale analysis and prove the  existence of a minimizer of the functional $J_\la$ over the Nehari manifold. 

\begin{Lemma}\label{lulem2}
Let $N\geq 3,\; \la\in (0,\la_1)$ and $q \in [2,2^*)$. Then the functional $J_\la$ satisfies the $(PS)_c$ condition for all $c<\frac{N-\mu+2}{2(2N-\mu)}S_{H,L}^{\frac{2N-\mu}{N-\mu+2}}$. 
\end{Lemma}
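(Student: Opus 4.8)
The plan is to follow the classical Brezis--Nirenberg / concentration-compactness strategy adapted to the Hardy--Littlewood--Sobolev nonlinearity. First I would take a sequence $(u_n)\subset H_0^1(\Om)$ with $J_\la(u_n)\to c$ and $J_\la'(u_n)\to 0$ in $H^{-1}$, and show it is bounded: combining $J_\la(u_n)-\frac{1}{2\cdot 2^*_\mu}\ld J_\la'(u_n),u_n\rd$ produces, using $\la<\la_1$ and $q<2^*<2\cdot 2^*_\mu$ (so the $\|u_n\|^q$ term is absorbed), a coercive lower bound of the form $c+o(1)+o(\|u_n\|)\geq C\|u_n\|^2$ when $q=2$, and an analogous estimate for $q\in(2,2^*)$ using H\"older and Sobolev to control $\int_\Om|u_n|^q$ by $\|u_n\|^q$ with $q<2$. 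Hence up to a subsequence $u_n\rp u$ in $H_0^1(\Om)$, $u_n\to u$ in $L^r(\Om)$ for $r<2^*$ (Rellich), and $u_n\to u$ a.e. One checks that $u$ is a critical point of $J_\la$ by passing to the limit in $\ld J_\la'(u_n),\varphi\rd$, using that the nonlocal term is weakly continuous against fixed test functions.

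Next I would set $v_n:=u_n-u$ and apply a Brezis--Lieb type splitting: the standard Brezis--Lieb lemma gives $\|u_n\|^2=\|u\|^2+\|v_n\|^2+o(1)$ and $\int_\Om|u_n|^q=\int_\Om|u|^q+o(1)$, while the nonlocal analogue (the Brezis--Lieb lemma for the Choquard term, which I would cite from the literature on critical Choquard problems, e.g.\ the references \cite{yangjmaa, choqcoron} already used) yields $\|u_n\|_{NL}^{2\cdot 2^*_\mu}=\|u\|_{NL}^{2\cdot 2^*_\mu}+\|v_n\|_{NL}^{2\cdot 2^*_\mu}+o(1)$. Feeding these into $J_\la(u_n)\to c$ and $\ld J_\la'(u_n),u_n\rd\to 0$, and using $\ld J_\la'(u),u\rd=0$, I obtain
\[
\tfrac12\|v_n\|^2-\tfrac{1}{2\cdot 2^*_\mu}\|v_n\|_{NL}^{2\cdot 2^*_\mu}=c-J_\la(u)+o(1),\qquad \|v_n\|^2=\|v_n\|_{NL}^{2\cdot 2^*_\mu}+o(1).
\]
Write $\|v_n\|^2\to\ell\geq 0$. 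If $\ell>0$, the definition of $S_{H,L}$ gives $\|v_n\|^2\geq S_{H,L}\|v_n\|_{NL}^{2}=S_{H,L}(\|v_n\|_{NL}^{2\cdot 2^*_\mu})^{1/2^*_\mu}$, hence in the limit $\ell\geq S_{H,L}\,\ell^{1/2^*_\mu}$, which forces $\ell\geq S_{H,L}^{\,2^*_\mu/(2^*_\mu-1)}=S_{H,L}^{\frac{2N-\mu}{N-\mu+2}}$. Then from the first displayed relation, $c-J_\la(u)=\bigl(\tfrac12-\tfrac{1}{2\cdot 2^*_\mu}\bigr)\ell=\tfrac{N-\mu+2}{2(2N-\mu)}\ell\geq \tfrac{N-\mu+2}{2(2N-\mu)}S_{H,L}^{\frac{2N-\mu}{N-\mu+2}}$. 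Since $u$ is a critical point, $J_\la(u)=\bigl(\tfrac12-\tfrac1q\bigr)\int_\Om|u|^q+\bigl(\tfrac12-\tfrac{1}{2\cdot 2^*_\mu}\bigr)\|u\|_{NL}^{2\cdot 2^*_\mu}\geq 0$ (this is where $q\geq 2$ and $2\cdot 2^*_\mu>2$ enter, together with $\la>0$). Therefore $c\geq \tfrac{N-\mu+2}{2(2N-\mu)}S_{H,L}^{\frac{2N-\mu}{N-\mu+2}}$, contradicting the hypothesis on $c$. Hence $\ell=0$, i.e.\ $v_n\to 0$ strongly, so $u_n\to u$ in $H_0^1(\Om)$ and the $(PS)_c$ condition holds.

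The main technical obstacle is the nonlocal Brezis--Lieb splitting for the term $\|\cdot\|_{NL}^{2\cdot 2^*_\mu}$: unlike the local critical term it involves a double integral, and one must justify that the cross terms $\int_\Om\int_\Om \frac{|u(x)|^{2^*_\mu}|v_n(y)|^{2^*_\mu}}{|x-y|^\mu}\,dx\,dy$ and the mixed Brezis--Lieb remainders vanish; this uses the Hardy--Littlewood--Sobolev inequality to bound the bilinear form by products of $L^{2N/(2N-\mu)}$ norms of $|u|^{2^*_\mu}$ and $|v_n|^{2^*_\mu}$ together with the weak convergence $|v_n|^{2^*_\mu}\rp 0$ in $L^{2N/(2N-\mu)}(\Om)$ and the Brezis--Lieb lemma applied in that Lebesgue space. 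A secondary point requiring care is the boundedness argument in the borderline case $q=2$, where one genuinely needs $\la<\la_1$ to keep the coefficient $\tfrac12(1-\la/\la_1)$ positive. Everything else is routine once these two points are settled.
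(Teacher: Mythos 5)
Your proposal is correct and follows essentially the same route as the paper's proof: extract a weak limit $u$ which is a critical point with $J_\la(u)\ge 0$, apply the Brezis--Lieb splitting to both $\|\cdot\|^2$ and the nonlocal term $\|\cdot\|_{NL}^{2\cdot 2^*_\mu}$, and observe that a nonvanishing remainder $\ell>0$ forces $\ell\ge S_{H,L}^{\frac{2N-\mu}{N-\mu+2}}$ and hence $c$ at or above the threshold. The only real divergence is the boundedness step: the paper argues by contradiction with the normalized sequence $u_n/\|u_n\|$, whereas you use coercivity of $J_\la(u_n)-\frac{1}{2\cdot 2^*_\mu}\ld J_\la'(u_n),u_n\rd$; be aware that for $q\in(2,2^*)$ this particular multiplier leaves a term of order $-\|u_n\|^{q}$ with $q>2$ that cannot be absorbed into $\|u_n\|^2$, so in that range you should instead subtract $\frac1q\ld J_\la'(u_n),u_n\rd$, which yields $\bigl(\frac12-\frac1q\bigr)\|u_n\|^2$ plus a nonnegative nonlocal term (this is exactly the computation the paper carries out in Lemma \ref{lulem14}). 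With that standard adjustment the argument is complete.
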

\begin{proof}
	Let $\{u_n\}$ be a sequence in $H_0^1(\Om)$ such that 
	\begin{align}\label{lu4}
	J_\la(u_n)\ra c \text{ and }  \left\ld J_\la^{\prime}(u_n), \frac{u_n}{\|u_n\|}\right\rd  \ra 0 \text{ as } n \ra \infty.
	\end{align}
	\textbf{ Claim 1: } $u_n$ is a bounded sequence in $H_0^1(\Om)$.\\

	On the contrary assume that $\|u_n\| \ra \infty$. Let $\widetilde{u_n}= \ds \frac{u_n}{\|u_n\|}$ be a sequence in $H_0^1(\Om)$ then $\|\widetilde{u_n}\| =1$ for all $n$. Therefore we can assume there exists $\widetilde{u}$, up to   subsequences
	\begin{align*}
	\widetilde{u_n} \rp \widetilde{u}  \text{ weakly in } H_0^1(\Om), \quad \widetilde{u_n} \ra \widetilde{u}  \text{ strongly in } L^r(\Om) \text{ for all } r \in [1,2^*).
	\end{align*}
	Using \eqref{lu4} we have 
	\begin{align*}
	& \frac{1}{2}\|\widetilde{u_n}\|^2-\frac{\la}{q} \|u_n\|^{q-2}\int_{\Om}|\widetilde{u_n}|^q~dx- \frac{1}{2\cdot 2^*_{\mu}}\|u_n\|^{2\cdot 2^*_{\mu}-2}\|\widetilde{u_n}\|_{NL}^{2\cdot 2^*_{\mu}}= o_n(1) \quad \text{ and }\\
	& \|\widetilde{u_n}\|^2- \la \|u_n\|^{q-2}\int_{\Om}|\widetilde{u_n}|^q~dx- \|u_n\|^{2\cdot 2^*_{\mu}-2}\|\widetilde{u_n}\|_{NL}^{2\cdot 2^*_{\mu}}= o_n(1). 
	\end{align*}
	It implies that 
	\begin{align*}
	\left(\frac{1}{2}-\frac{1}{2\cdot 2^*_{\mu}} \right)\|\widetilde{u_n}\|^2=  \left(\frac{1}{q}- \frac{1}{2\cdot 2^*_{\mu}}\right) \la \|u_n\|^{q-2}\int_{\Om}|\widetilde{u_n}|^q~dx+ o_n(1).
	\end{align*}
	Now if $q>2$ and $\la>0$ then by the assumption $\|u_n\|\ra \infty$, we get $\|\widetilde{u_n}\| \ra \infty$, which is not possible. If $q=2$ and $\la \in (0,\la_1)$,  then $0< \left(1-\frac{\la}{\la_1}\right)\|u_n\|^{2}\leq  o_n(1)$, which is again not possible, this concludes the proof of Claim. \\
 Hence we can assume,  there exists a $u_0 \in H_0^1(\Om)$ such that  up to a subsequence $u_n\rp u_0$ weakly in $H_0^1(\Om)$,   $u_n \ra u_0  \text{ strongly in } L^r(\Om) \text{ for all } r \in [1,2^*)$ and $u_n \ra u_0 $ a.e. on $\Om$.  Using all this and  proceeding with the same assertions as in \cite[Lemma 2.4]{yang}, we get $J_\la^{\prime}(u_0)=0$. Now  the Brezis-Leib Lemma (See \cite{bre1983, yang}) leads to 
\begin{align*}
J_\la(u_n)
& = J_\la(u_0)+ \frac{1}{2}\|u_n-u_0\|^2- \frac{1}{2\cdot 2^*_{\mu}}\|u_n-u_0\|_{NL}^{2\cdot 2^*_{\mu}} +o_n(1)
\end{align*}
and  
 \begin{align}
o_n(1)& = \ld J_\la^{\prime}(u_n)-J_\la^{\prime}(u_0) ,\; u_n- u_0 \rd \nonumber\\
& = \|u_n\|^2 - \|u_0\|^2 - \|u_n\|_{NL}^{2\cdot 2^*_{\mu}}+ \|u_0\|_{NL}^{2\cdot 2^*_{\mu}} = \|u_n-u_0\|^2  - \|u_n-u_0\|_{NL}^{2\cdot 2^*_{\mu}}\label{lu15}.
 \end{align}
It implies  $J_\la(u_0)+ \frac{N-\mu+2}{2(2N-\mu)}\|u_n-u_0\|^2 =c+o_n(1)$ and if  $\|u_n-u_0\|^2 \ra M$  as $n\ra \infty$ then by \eqref{lu15},  $\|u_n-u_0\|_{NL}^{2\cdot 2^*_{\mu}}  \ra M$ as $n\ra \infty$. If $M=0$ then we are done otherwise  if $M>0$ then using the definition of $S_{H,L}$, we have $M^{\frac{1}{2^*_{\mu}}}S_{H,L}\leq M$ that is, $S_{H,L}^{\frac{2N-\mu}{N-\mu+2}} \leq M$. Since $\ld J_\la^{\prime}(u_0),\; u_0 \rd =0 $, it gives 
\begin{align*}
J_\la(u_0)= \left(\frac{1}{2}-\frac{1}{q}\right)\|u_0\|^2+ \left(\frac{1}{2}-\frac{1}{2\cdot 2^*_{\mu}}\right)\|u_0\|_{NL}^{2\cdot 2^*_{\mu}} \geq 0. 
\end{align*}
Resuming the information collected so far, what we have gained is that, 
\begin{align*}
o_n(1)+c= J_\la(u_0)+ \frac{N-\mu+2}{2(2N-\mu)}M \geq \frac{N-\mu+2}{2(2N-\mu)}S_{H,L}^{\frac{2N-\mu}{N-\mu+2}}, 
\end{align*}
which yields a contradiction to the range of $c$. Hence compactness of the sequence follows. \QED
	\end{proof}
\begin{Lemma}\label{lulem14}
	Let $N\geq 3$ and $\la\in (0,\la_1)$ then $J_\la$ constraint to $N_\la^{\Om}$ satisfies the $(PS)_c$ condition for all $c<\frac{N-\mu+2}{2(2N-\mu)}S_{H,L}^{\frac{2N-\mu}{N-\mu+2}}$. 
\end{Lemma}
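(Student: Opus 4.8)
The plan is to deduce the constrained Palais--Smale condition from the unconstrained one established in Lemma~\ref{lulem2}. Let $\{u_n\}\subset N_\la^\Om$ satisfy $J_\la(u_n)\ra c$ and $\|(J_\la|_{N_\la^\Om})'(u_n)\|\ra0$; the goal is to show $J_\la'(u_n)\ra0$ in $H^{-1}(\Om)$, so that $\{u_n\}$ becomes an ordinary $(PS)_c$ sequence and Lemma~\ref{lulem2} furnishes a strongly convergent subsequence. First I would check boundedness: on $N_\la^\Om$ the functional reduces, as in Lemma~\ref{lulem8}, to a sum of $\int_\Om|u_n|^q\,dx$ and $\|u_n\|_{NL}^{2\cdot2^*_\mu}$ with strictly positive coefficients, so $J_\la(u_n)\ra c$ makes $\|u_n\|_{NL}$ bounded and, for $q>2$, $\int_\Om|u_n|^q\,dx$ bounded; combining with the Nehari identity $\|u_n\|^2=\la\int_\Om|u_n|^q\,dx+\|u_n\|_{NL}^{2\cdot2^*_\mu}$ (and, when $q=2$, absorbing $\la\int_\Om|u_n|^2\,dx\le(\la/\la_1)\|u_n\|^2$ using $\la<\la_1$) gives $\sup_n\|u_n\|<\infty$. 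A uniform lower bound $\|u_n\|\ge\de_0>0$ also follows from the Nehari identity together with the Sobolev and Hardy--Littlewood--Sobolev inequalities, since $q\ge2$ and $2\cdot2^*_\mu>2$.

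Next, set $\Psi(u):=\ld J_\la'(u),u\rd=\|u\|^2-\la\int_\Om|u|^q\,dx-\|u\|_{NL}^{2\cdot2^*_\mu}$, so that $N_\la^\Om=\{u\neq0:\Psi(u)=0\}$. On $N_\la^\Om$ the Nehari identity gives
\[
\ld\Psi'(u),u\rd=(2-q)\la\int_\Om|u|^q\,dx-(2\cdot2^*_\mu-2)\|u\|_{NL}^{2\cdot2^*_\mu}<0,
\]
the strict sign holding because $q\ge2$, $2\cdot2^*_\mu>2$, and (using $\la<\la_1$ in the case $q=2$) the two nonpositive terms cannot vanish simultaneously on $N_\la^\Om$, as that would force $u=0$. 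In particular $\Psi'(u)\neq0$ on $N_\la^\Om$, so the constraint is a genuine $C^1$ manifold and the Lagrange multiplier rule provides $\mu_n\in\R$ with $J_\la'(u_n)-\mu_n\Psi'(u_n)\ra0$ in $H^{-1}(\Om)$. Pairing against $u_n$ (bounded by the first paragraph) and using $\ld J_\la'(u_n),u_n\rd=0$ yields $\mu_n\ld\Psi'(u_n),u_n\rd\ra0$; and if $\ld\Psi'(u_n),u_n\rd$ did not stay bounded away from $0$, then along a subsequence both nonpositive terms in the displayed identity would tend to $0$, whence $\|u_n\|^2=\la\int_\Om|u_n|^q\,dx+\|u_n\|_{NL}^{2\cdot2^*_\mu}\ra0$, contradicting the lower bound. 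Hence $\ld\Psi'(u_n),u_n\rd\le-\de<0$ for large $n$, so $\mu_n\ra0$ and therefore $J_\la'(u_n)\ra0$ in $H^{-1}(\Om)$.

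Finally, $\{u_n\}$ is then a $(PS)_c$ sequence for $J_\la$ on all of $H_0^1(\Om)$ with $c<\frac{N-\mu+2}{2(2N-\mu)}S_{H,L}^{\frac{2N-\mu}{N-\mu+2}}$, and Lemma~\ref{lulem2} supplies the required strongly convergent subsequence; this is precisely the $(PS)_c$ condition for $J_\la$ restricted to $N_\la^\Om$. The main obstacle is the second paragraph: one must establish the \emph{uniform} non-degeneracy $\liminf_n|\ld\Psi'(u_n),u_n\rd|>0$, since only this converts $\mu_n\ld\Psi'(u_n),u_n\rd\ra0$ into $\mu_n\ra0$, and this is exactly where the hypotheses $q\ge2$, $0<\la<\la_1$ and the lower bound on $\|u_n\|$ all enter. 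Everything else is a routine adaptation of the Nehari-manifold machinery combined with Lemma~\ref{lulem2}.
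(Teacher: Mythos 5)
Your proof is correct and follows essentially the same route as the paper: write the constrained Palais--Smale condition via a Lagrange multiplier for $T_\la(u)=\ld J_\la'(u),u\rd$, show the multiplier tends to zero using $\ld T_\la'(u_n),u_n\rd=(2-q)\la\int_\Om|u_n|^q\,dx+(2-2\cdot 2^*_\mu)\|u_n\|_{NL}^{2\cdot 2^*_\mu}$, and then invoke the unconstrained Lemma \ref{lulem2}. The only divergence is the degenerate case $\ld T_\la'(u_n),u_n\rd\to 0$: the paper concludes there that $u_n\to 0$ strongly and stops, whereas you exclude this case outright via the uniform lower bound $\|u_n\|\ge\de_0>0$ on $N_\la^\Om$ --- a slightly cleaner treatment, since a limit $0\notin N_\la^\Om$ sits awkwardly with a $(PS)_c$ condition on the constraint.
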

\begin{proof}
	Let $u_n\in N_\la^{\Om}$ be such that $J_\la(u_n) \ra c$ and there exists a sequence $\{\al_n \}$ in $\mathbb{R}$ with 
	\begin{align}\label{lu20}
	\sup \{|\ld J_\la^\prime(u_n)-  \al_n T_\la^{\prime}(u_n) , \phi \rd | : \phi \in H_0^1(\Om), \|\phi \|=1 \} \ra 0 \text{ as } n\ra \infty, 
	\end{align}
	where the functional $T_\la$ is defined as $T_\la(u)= \|u\|^2 -\la \int_{ \Om}|u|^q~dx - \|u\|_{NL}^{2\cdot 2^*_{\mu}}$. 	First of all, we will show that $u_n$ is a bounded sequence in $H_0^1(\Om)$.	From the fact that $J_\la(u_n) \ra c$, it is easy to see that there exists a positive constant $C_1$ such that $
	|J_\la(u_n)|<C_1$. 
	If $q \in (2,2^*)$ then using the fact that $u_n \in N_\la^{\Om}$, we deduce that 
	\begin{align*}
	C_1& > J_\la(u_n)- \frac{1}{q} \ld J_\la^{\prime}(u_n), u_n \rd\\
	& =  \left( \frac{1}{2}-\frac{1}{q}\right) \|u_n\|^2 + \left( \frac{1}{q}-\frac{1}{2\cdot 2^*_{\mu}}\right) \|u_n\|_{NL}^{2\cdot 2^*_\mu}\\
	& \geq  \left( \frac{1}{2}-\frac{1}{q}\right) \|u_n\|^2. 
	\end{align*}
	If $q=2$, for $\la \in (0,\la_1)$, we obtain, for any $n \in \mathbb{N}$, 
	\begin{align*}
	C_1
	&> J_\la(u_n)- \frac{1}{2\cdot 2^*_{\mu}} \ld J_\la^{\prime}(u_n), u_n \rd\\
	& =  \left( \frac{1}{2}-\frac{1}{2\cdot 2^*_{\mu}}\right) \|u_n\|^2 -\la \left( \frac{1}{2}-\frac{1}{2\cdot 2^*_{\mu}}\right) \int_{ \Om}|u_n|^2~dx\\
	& \geq  \left( \frac{1}{2}-\frac{1}{2\cdot 2^*_{\mu}}\right) \left(1-\frac{\la}{\la_1}\right) \|u_n\|^2. 
	\end{align*}
	This proves that $u_n$ is a bounded sequence in $H_0^1(\Om)$. It implies that $\{\ld T_\la^\prime(u_n), u_n\rd  \}$ is a bounded sequence in $\mathbb{R}$ and there exists $\kappa \in (-\infty,0]$ such that, up to a subsequence, 
	$\ld T_\la^\prime(u_n), u_n\rd  \ra \kappa $ as $n \ra \infty$. Let if possible, $\kappa<0$ then using the fact that $u_n \in N_\la^\Om$ and \eqref{lu19}, we have 
	\begin{align*}
	\ld  \al_n T_\la^\prime(u_n), u_n\rd  \ra 0 \text{ as } n \ra \infty.
	\end{align*}
This implies $\al_n \ra 0$ as $n \ra \infty$. That is, 
\begin{align*}
\sup \{|\ld J_\la^\prime(u_n) , \phi \rd | : \phi \in H_0^1(\Om), \|\phi \|=1 \} \ra 0 \text{ as } n\ra \infty, 
\end{align*}
which on employing Lemma \ref{lulem2} gives that $u_n$ has a convergent subsequence. At last suppose $\kappa=0$. Since 
\begin{align*}
\ld   T_\la^\prime(u_n), u_n\rd  = \la(2-q)\int_{ \Om}|u_n|^q~dx+(2-2\cdot 2^*_{\mu}) \|u_n\|_{NL}^{2\cdot 2^*_{\mu}}\ra \kappa,  
\end{align*}
 then  $\int_{ \Om}|u_n|^q~dx \ra 0 $ and $\|u_n\|_{NL}^{2\cdot 2^*_{\mu}}\ra0$. Taking into account the fact $u_n \in N_\la^{\Om}$ we have $ \|u_n\| \ra 0$. That is, $u_n \ra 0$ strongly in $H_0^1(\Om)$. \QED
\end{proof}
In order to proceed further we will use the minimizer of $S_{H,L}$. From Lemma \ref{lulem13} we know that 
\begin{align*}
U_\e(x)= S^{\frac{(N-\mu)(2-N)}{4(N-\mu+2)}}(C(N,\mu))^{\frac{2-N}{2(N-\mu+2)}}\left(\frac{\e}{\e^2+|x|^2}\right)^{\frac{N-2}{2}}, \; 0<\e<1
\end{align*}
are the minimizers of $S_{H,L}$.  Without loss of generality, let us assume that  $0 \in \Om$.
This implies there exists a $\de>0$ such that $B_{4\de}(0)\subset \Om$.  Now  define $\eta\in C_c^{\infty}(\mathbb{R}^N)$ such that $0\leq \eta\leq 1$ in $\mathbb{R}^N$, $\eta\equiv1$ in $B_\de(0)$ and $\eta\equiv 0 $ in $\mathbb{R}^N \setminus B_{2\de}(0)$ and $|\na \eta |< C$. Let $u_\e\in  H_0^1(\Om)$ be defined as  $u_\e(x)= \eta(x) U_\e(x)$. 
\begin{Proposition}\label{luprop3}
	Let $N\geq 3,\; 0<\mu<N$ and $q\in (2,2^*)$ then the following holds:
	\begin{enumerate}
		\item [(a)] $\|u_\e\|^2 \leq  S_{H,L}^{\frac{2N-\mu}{N-\mu+2}}+O(\e^{N-2})$.
		\item [(b)] $\|u_\e\|_{NL}^{2\cdot 2^*_{\mu}}\leq S_{H,L}^{\frac{2N-\mu}{N-\mu+2}}+O(\e^N)$ and $\|u_\e\|_{NL}^{2\cdot 2^*_{\mu}}\geq S_{H,L}^{\frac{2N-\mu}{N-\mu+2}}-O(\e^N)$.
		\item [(c)]$\ds \int_{ \Om}|u_\e|^2~dx \geq C \left\{
		\begin{array}{ll}
		\e^2+ O(\e^{N-2}), &  \text{ if }  N>4, \\
		\e^{2} |\log\; \e|+O(\e^{2}), & \text{ if } N=4\\
		\e^{N-2} +O(\e^{2}), & \text{ if } N<4.\\
		\end{array} 
		\right.$ 
		\item[(d)]  $\ds \int_{ \Om}|u_\e|^q~dx \geq O(\e^{N-\frac{N-2}{2}q})$ whenever 
		$q \in (2,2^*)$ and $N>3  $ OR $4<q<6 $ and $N=3$.
	\end{enumerate}
\end{Proposition}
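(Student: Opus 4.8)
The plan is to read all four items as standard concentration estimates about the point $0\in\Om$, built from the explicit profile of $U_\e$ and the scaling $U_\e(x)=\e^{-(N-2)/2}U_1(x/\e)$. First I would record the facts used throughout. Since the constant in the definition of $U_\e$ is chosen so that $U_\e$ solves the Euler--Lagrange equation associated with $S_{H,L}$ (cf.\ Lemma \ref{lulem13}), testing that equation against $U_\e$ and invoking the definition \eqref{nh5} gives the scale--invariant identity
\[
\int_{\mathbb{R}^N}|\na U_\e|^2\,dx=\int_{\mathbb{R}^N}\int_{\mathbb{R}^N}\frac{U_\e^{2^*_\mu}(x)\,U_\e^{2^*_\mu}(y)}{|x-y|^{\mu}}\,dx\,dy=S_{H,L}^{\frac{2N-\mu}{N-\mu+2}}.
\]
I would also record the elementary pointwise bounds on $\{|x|\ge\de\}$, namely $U_\e(x)\le C\e^{(N-2)/2}|x|^{-(N-2)}$, $U_\e^{2^*_\mu}(x)\le C\e^{N-\mu/2}|x|^{-(2N-\mu)}$ and $U_\e^{2^*}(x)\le C\e^{N}|x|^{-2N}$, together with the global bound $\int_{\mathbb{R}^N}U_\e^{2^*_\mu}\,dx=O(\e^{\mu/2})$ (all obtained by rescaling, the last using $0<\mu<N$). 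For every lower bound in (b)--(d) I would restrict the integral over $\Om$ to $B_\de(0)$, where $\eta\equiv1$ and hence $u_\e=U_\e$; for the upper bounds in (a)--(b) I would use $0\le\eta\le1$, i.e.\ $|u_\e|\le U_\e$ pointwise; the whole work is to control the discrepancies.

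For (a), I expand $|\na u_\e|^2=\eta^2|\na U_\e|^2+2\eta U_\e\,\na\eta\cdot\na U_\e+U_\e^2|\na\eta|^2$. The first term is at most $\int_{\mathbb{R}^N}|\na U_\e|^2=S_{H,L}^{\frac{2N-\mu}{N-\mu+2}}$, while the remaining two are supported in the annulus $B_{2\de}(0)\setminus B_\de(0)$, on which $|\na\eta|\le C$ and both $U_\e$ and $|\na U_\e|$ are of order $\e^{(N-2)/2}$, so they contribute $O(\e^{N-2})$; this is (a). For (b), the upper bound is immediate: $\|u_\e\|_{NL}^{2\cdot2^*_\mu}\le\int\!\int_{\mathbb{R}^N\times\mathbb{R}^N}\frac{U_\e^{2^*_\mu}U_\e^{2^*_\mu}}{|x-y|^{\mu}}=S_{H,L}^{\frac{2N-\mu}{N-\mu+2}}$. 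For the lower bound I keep only the integration over $B_\de(0)\times B_\de(0)$, so that $\|u_\e\|_{NL}^{2\cdot2^*_\mu}\ge S_{H,L}^{\frac{2N-\mu}{N-\mu+2}}-\mathcal{E}_\e$ with $\mathcal{E}_\e\le 2\int_{\mathbb{R}^N\setminus B_\de(0)}\int_{\mathbb{R}^N}\frac{U_\e^{2^*_\mu}(x)U_\e^{2^*_\mu}(y)}{|x-y|^{\mu}}\,dy\,dx$. I then split the inner integral at $|y|=\de/2$: on $\{|y|\le\de/2\}$ one has $|x-y|\ge\de/2$, so the kernel is bounded and this part is $\le C\big(\int_{|x|\ge\de}U_\e^{2^*_\mu}\big)\big(\int_{\mathbb{R}^N}U_\e^{2^*_\mu}\big)=O(\e^{N-\mu/2})\,O(\e^{\mu/2})=O(\e^{N})$; on $\{|y|\ge\de/2\}$ I apply the Hardy--Littlewood--Sobolev inequality \eqref{co9} with $t=r=2N/(2N-\mu)$, noting $U_\e^{2^*_\mu\cdot\frac{2N}{2N-\mu}}=U_\e^{2^*}$, to get $C\big(\int_{|x|\ge\de}U_\e^{2^*}\big)^{\frac{2N-\mu}{2N}}\big(\int_{|y|\ge\de/2}U_\e^{2^*}\big)^{\frac{2N-\mu}{2N}}=O(\e^{2N-\mu})=O(\e^{N})$, the last step using $2N-\mu>N$. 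Hence $\mathcal{E}_\e=O(\e^N)$ and (b) follows.

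Items (c) and (d) are one--variable radial integrals. Restricting to $B_\de(0)$ and rescaling $x=\e z$ gives $\int_\Om|u_\e|^2\,dx\ge c\,\e^{2}\int_{B_{\de/\e}(0)}(1+|z|^2)^{-(N-2)}\,dz$ and $\int_\Om|u_\e|^q\,dx\ge c\,\e^{\,N-\frac{N-2}{2}q}\int_{B_{\de/\e}(0)}(1+|z|^2)^{-\frac{(N-2)q}{2}}\,dz$. For (c) one examines $\int_{B_{\de/\e}(0)}(1+|z|^2)^{-(N-2)}\,dz$ as $\e\to0$: it converges to a positive constant when $N>4$ (remainder $O(\e^{N-4})$, giving $c\,\e^2+O(\e^{N-2})$), it is $\sim|\log\e|$ when $N=4$, and $\sim\de/\e$ when $N=3$, which are exactly the three stated cases. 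For (d), the integrand is positive, so $\int_{B_{\de/\e}(0)}(1+|z|^2)^{-\frac{(N-2)q}{2}}\,dz\ge\int_{B_1(0)}(1+|z|^2)^{-\frac{(N-2)q}{2}}\,dz=:c_0>0$ for all $\e<\de$, whence $\int_\Om|u_\e|^q\,dx\ge C\,\e^{\,N-\frac{N-2}{2}q}$; the hypotheses listed in (d) serve only to guarantee $N-\frac{N-2}{2}q<N-2$, which is what will make this term dominate the $O(\e^{N-2})$ of (a) in the subsequent mountain--pass estimate.

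The only genuinely delicate point is the rate $O(\e^N)$ in the error of the lower bound of (b): a blunt application of \eqref{co9} to the full tail yields only $O(\e^{N-\mu/2})$, so one must separate the region where both points lie away from $0$ (small directly by Hardy--Littlewood--Sobolev, because $\int_{|x|\ge\de}U_\e^{2^*}=O(\e^N)$) from the region where one point is near $0$ (small because the singular kernel is then harmless and one may use the sharp global decay $\int_{\mathbb{R}^N}U_\e^{2^*_\mu}=O(\e^{\mu/2})$). Everything else reduces to integrating explicit radial functions while tracking the borderline exponents.
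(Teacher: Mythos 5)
Your proposal is correct. Note, however, that the paper itself only computes item (d) explicitly and disposes of (a), (c) by citing \cite[Lemma 1.46]{willem} and of (b) by citing \cite[Proposition 2.8]{systemchoq}; what you have done is reconstruct, correctly, the standard concentration estimates that those references contain. For (d) your argument coincides with the paper's up to a cosmetic difference: the paper evaluates the rescaled radial integral $\int_1^{\de/\e}r^{N-1-(N-2)q}\,dr$ using the sign condition $0<(N-2)q-N$, whereas you simply bound the rescaled integral below by its value on $B_1(0)$, which works for every $q\in(2,2^*)$ and correctly identifies the hypotheses of (d) as being needed only for the later comparison $N-\tfrac{N-2}{2}q<N-2$ in Lemma \ref{lulem3}. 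The one place where you add genuine content beyond the paper's text is the lower bound in (b): your two-region splitting of the tail (bounded kernel when one variable stays near the origin, combined with $\int_{\mathbb{R}^N}U_\e^{2^*_\mu}=O(\e^{\mu/2})$ and $\int_{|x|\ge\de}U_\e^{2^*_\mu}=O(\e^{N-\mu/2})$; Hardy--Littlewood--Sobolev when both variables are far, using $\int_{|x|\ge\de}U_\e^{2^*}=O(\e^{N})$ and $2N-\mu>N$) is exactly the device needed to reach the rate $O(\e^N)$ rather than the cruder $O(\e^{N-\mu/2})$, and it is consistent with what the cited reference proves. No gaps.
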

\begin{proof}
	For  (a) and (c) See \cite[Lemma 1.46]{willem}. For (b) See \cite[Proposition 2.8]{systemchoq}. For  (d),  first let $N>3$ and $2<q<2^*$  then $0<  (N-2)q-N<N$. Now let $N=3$ and $4<q<6$ then $1<q-3<3$. Hence we have the following estimate 
		\begin{align*}
	\int_{ \Om}|u_\e|^q~dx& \geq  C\int_{ |x|<\de}|U_\e|^q~dx\\
	& \geq C\e^{N-\frac{N-2}{2}q} \int_{1}^{\frac{\de}{\e}} r^{N-1-(N-2)q}~dx\\
	& = \frac{C\e^{N-\frac{N-2}{2}q} }{(N-2)q-N}\left[1- \left(\frac{\e}{\de}\right)^{(N-2)q-N}\right]= O(\e^{N-\frac{N-2}{2}q}).
	\end{align*}
	\QED
\end{proof}
\begin{Lemma}\label{lulem3}
	Let $N\geq 3$ and $\la>0$ and condition $\mathcal{(Q)} $ holds. Then $\widehat{\theta}_\la< \frac{N-\mu+2}{2(2N-\mu)}S_{H,L}^{\frac{2N-\mu}{N-\mu+2}}$.
\end{Lemma}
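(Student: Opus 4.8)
The plan is to bound the mountain-pass level in \eqref{lu14} from above by evaluating the inner supremum on a single, well-chosen function. Since $\widehat{\theta}_\la\leq\sup_{t\geq0}J_\la(tu)$ for every $u\in H_0^1(\Om)\setminus\{0\}$, it suffices to exhibit one $u$ for which $\sup_{t\geq0}J_\la(tu)$ lies strictly below the threshold $c^*:=\frac{N-\mu+2}{2(2N-\mu)}S_{H,L}^{\frac{2N-\mu}{N-\mu+2}}$; I would take $u=u_\e=\eta U_\e$, the truncated extremal of $S_{H,L}$ introduced just before Proposition \ref{luprop3}, and let $\e\to0$. Writing $A_\e=\|u_\e\|^2$, $B_\e=\|u_\e\|_{NL}^{2\cdot 2^*_{\mu}}$, $C_\e=\int_\Om|u_\e|^q~dx$ and $p=2\cdot 2^*_{\mu}=\frac{2(2N-\mu)}{N-2}$, we have $J_\la(tu_\e)=\psi_\e(t)-\frac{\la t^q}{q}C_\e$ with $\psi_\e(t)=\frac{A_\e}{2}t^2-\frac{B_\e}{p}t^p$, so the whole estimate reduces to controlling $\sup_{t\geq0}J_\la(tu_\e)$.

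First I would dispose of the purely critical part. The function $\psi_\e$ is maximised on $[0,\infty)$ at $t_\e^*=(A_\e/B_\e)^{1/(p-2)}$, and an explicit computation, using $p-2=\tfrac{2(N-\mu+2)}{N-2}$ to simplify the exponents, gives
\[
\max_{t\geq0}\psi_\e(t)=\Big(\tfrac12-\tfrac1p\Big)A_\e^{\frac{p}{p-2}}B_\e^{-\frac{2}{p-2}}=\frac{N-\mu+2}{2(2N-\mu)}\,A_\e^{\frac{2N-\mu}{N-\mu+2}}\,B_\e^{-\frac{N-2}{N-\mu+2}}.
\]
Inserting $A_\e\leq S_{H,L}^{\frac{2N-\mu}{N-\mu+2}}+O(\e^{N-2})$ and $B_\e\geq S_{H,L}^{\frac{2N-\mu}{N-\mu+2}}-O(\e^{N})$ from Proposition \ref{luprop3}(a)--(b), and noting that the two displayed exponents differ by exactly $1$, a routine expansion yields $\max_{t\geq0}\psi_\e(t)\leq c^*+O(\e^{N-2})$.

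Next I would bring in the subcritical term in order to absorb this $O(\e^{N-2})$ error. Since $J_\la(tu_\e)\to-\infty$ as $t\to\infty$, its supremum over $[0,\infty)$ is attained at some $t_\e>0$, and $t_\e$ stays away from $0$ uniformly for small $\e$: indeed $J_\la(t_\e u_\e)=\sup_{t\geq0}J_\la(tu_\e)\geq J_\la(\rho u_\e/\|u_\e\|)\geq\al>0$ by Lemma \ref{efflem1}(i), while $J_\la(t_\e u_\e)\leq\psi_\e(t_\e)\leq\frac{A_\e}{2}t_\e^2$ with $A_\e$ bounded, so $t_\e\geq t_0$ for some fixed $t_0>0$. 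Hence
\[
\sup_{t\geq0}J_\la(tu_\e)=\psi_\e(t_\e)-\frac{\la t_\e^q}{q}C_\e\leq\max_{t\geq0}\psi_\e(t)-\frac{\la t_0^q}{q}C_\e\leq c^*+O(\e^{N-2})-\frac{\la t_0^q}{q}C_\e,
\]
and it remains only to check that $C_\e$ has strictly larger order than $\e^{N-2}$ --- which is exactly what condition $\mathcal{(Q)}$ secures. If $q\in(2,2^*)$, Proposition \ref{luprop3}(d) gives $C_\e\gtrsim\e^{\,N-\frac{N-2}{2}q}$, and $N-\frac{N-2}{2}q<N-2\Longleftrightarrow q>\frac{4}{N-2}$, which holds because $q>2\geq\frac{4}{N-2}$ when $N\geq4$ and $q>4=\frac{4}{N-2}$ when $N=3$; if $q=2$ then $\mathcal{(Q)}$ forces $N\geq4$, and Proposition \ref{luprop3}(c) gives $C_\e\gtrsim\e^{2}|\log\e|\gg\e^{2}$ when $N=4$ and $C_\e\gtrsim\e^{2}$ with $2<N-2$ when $N\geq5$. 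In every case the negative term dominates the error for $\e$ small, hence $\sup_{t\geq0}J_\la(tu_\e)<c^*$ and therefore $\widehat{\theta}_\la<c^*$.

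The hard part is the bookkeeping in this last step: the gain produced by the concave term $-\frac{\la t^q}{q}C_\e$ must be of strictly higher order in $\e$ than the truncation error $O(\e^{N-2})$ carried by the Aubin--Talenti bubble, which is precisely why $q$ cannot be too small relative to $N$ --- forcing $q>4$ in dimension $N=3$ --- and why one must also verify, as above, that the maximising parameter $t_\e$ does not collapse to $0$ as $\e\to0$, since that would wipe out the gain. Everything else is the one-dimensional maximisation of the second paragraph together with the asymptotics already recorded in Proposition \ref{luprop3}.
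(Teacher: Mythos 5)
Your proof is correct and follows essentially the same route as the paper: test with $tu_\e$, bound the maximum of the purely critical part by $\frac{N-\mu+2}{2(2N-\mu)}S_{H,L}^{\frac{2N-\mu}{N-\mu+2}}+O(\e^{N-2})$ via Proposition \ref{luprop3}(a)--(b), retain the subcritical gain through a uniform lower bound $t_\e\geq t_0$, and compare orders in $\e$ exactly as in the paper's two cases. The only (harmless) deviation is that you derive $t_\e\geq t_0$ from the mountain-pass geometry of Lemma \ref{efflem1}(i), whereas the paper extracts it from the stationarity identity \eqref{lu18} together with the Sobolev and Hardy--Littlewood--Sobolev bounds.
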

\begin{proof}
By the definition of $\widehat{\theta}_\la$, it is enough to show that for   $u_\e \in H_0^1(\Om)$, 
\begin{align*}
\ds\sup_{t\geq 0}J_\la(tu_\e)< \frac{N-\mu+2}{2(2N-\mu)}S_{H,L}^{\frac{2N-\mu}{N-\mu+2}}. 
\end{align*} 
 Let 
	\begin{align*}
	\mathcal{G}(t)= 	J_\la(t u_\e) & =  \frac{t^2}{2}\| u_\e\|^2 - \frac{\la t^q}{q} \int_{ \Om} |u_\e|^q~dx - \frac{t^{2\cdot 2^*_{\mu}}}{2\cdot 2^*_{\mu}} \|u_\e\|_{NL}^{2\cdot 2^*_{\mu}},
	\end{align*}	
	then  using the same assertions as in Lemma \ref{lulem8} for the function $\mathcal{G}$, we deduce that there exists unique  $t_\e>0$ such that $\ds \sup_{t\geq 0}\mathcal{G}(t)= \mathcal{G}(t_\e) =J_\la(t_\e u_\e)$ and $\mathcal{G}^\prime(t_\e)=0$, provided $\la\in (0,\la_1)$.  As a result, we obtain 
	\begin{align}\label{lu18}
t_\e^2 \|u_\e\|^2 -\la t_\e^q \int_{\Om}|u_\e|^q~dx- t_\e^{2\cdot 2^{*}_{\mu}}\|u_\e\|_{NL}^{2\cdot 2^{*}_{\mu}}=0.
	\end{align}
It implies $ \|u_\e\|^2= \la t_\e^{q-2}\int_{\Om}|u_\e|^q~dx+ t_\e^{2\cdot 2^{*}_{\mu}-2}\|u_\e\|_{NL}^{2\cdot 2^{*}_{\mu}}  $.  Therefore, using Proposition \ref{luprop3}, Sobolev embedding, definition of $S_{H,L}$ and the fact that $\la \in (0,\la_1)$,  we deduce  
	\begin{align*}
1& \leq \la C_1t_\e^{q-2}\|u_\e\|^{q-2}+C_2t_\e^{2\cdot 2^{*}_{\mu}-2}\|u_\e\|^{2\cdot 2^{*}_{\mu}-2}, 
	\end{align*}
	for some suitable constants $C_1, C_2>0$. It 	 gives that there exists  a $T_1>0$ such that $t_\e \geq T_1$. Also,  from \eqref{lu18}, 	 $t_\e^{2\cdot 2^{*}_{\mu}}\|u_\e\|_{NL}^{2\cdot 2^{*}_{\mu}}\leq t_\e^2 \|u_\e\|^2  $. That is, 
	\begin{align*}
	t_\e\leq \left(\frac{\| u_\e\|^2}{\| u_\e\|_{NL}^{2\cdot 2^*_{\mu}}}\right)^{\frac{1}{2\cdot 2^*_{\mu}-2}}. 
	\end{align*}
	Hence  
	\begin{align*}
	\sup_{t\geq 0}\mathcal{G}(t)& = \frac{t_\e^2}{2}\| u_\e\|^2 - \frac{\la t_\e^q}{q} \int_{ \Om} |u_\e|^q~dx - \frac{t_\e^{2\cdot 2^*_{\mu}}}{2\cdot 2^*_{\mu}} \|u_\e\|_{NL}^{2\cdot 2^*_{\mu}}\\
	& \leq  \sup_{ t\geq 0} \mathcal{V}(t) - \frac{\la T_1^q}{q} \int_{ \Om} |u_\e|^q~dx,
	\end{align*}
	where $\mathcal{V}(t)= \ds \frac{t^2}{2}\| u_\e\|^2 - \frac{t^{2\cdot 2^*_{\mu}}}{2\cdot 2^*_{\mu}} \|u_\e\|_{NL}^{2\cdot 2^*_{\mu}}$. Now using proposition \ref{luprop3} and the fact that  $\mathcal{V}(t)$ has maximum at $ t^*= \left(\frac{\| u_\e\|^2}{\| u_\e\|_{NL}^{2\cdot 2^*_{\mu}}}\right)^{\frac{1}{2\cdot 2^*_{\mu}-2}} $,  we get 
	\begin{align}\label{lu2}
	\sup_{t\geq 0}\mathcal{G}(t)\leq \frac{N-\mu+2}{2(2N-\mu)}S_{H,L}^{\frac{2N-\mu}{N-\mu+2}}+C_1\e^{N-2} -  \frac{\la T_1^q}{q} \int_{ \Om} |u_\e|^q~dx . 
	\end{align}
	\textbf{Case 1: } $N>3$ and  $q \in (2,2^*)$  OR  $N=3$ and $4<q<6$. \\
As a consequence of Proposition \ref{luprop3} and \eqref{lu2}, we have 
	\begin{align*}
	\sup_{t\geq 0}\mathcal{G}(t)&  \leq \frac{N-\mu+2}{2(2N-\mu)}S_{H,L}^{\frac{2N-\mu}{N-\mu+2}}+C_1\e^{N-2} -  \frac{\la T_1^q}{q} \int_{ \Om} |u_\e|^q~dx\\
	&  \leq \frac{N-\mu+2}{2(2N-\mu)}S_{H,L}^{\frac{2N-\mu}{N-\mu+2}}+C_1\e^{N-2} -  \frac{\la T_1^q}{q} C_2\e^{N-\frac{N-2}{2}q}.
		\end{align*}
		Now using the condition of $N$ and $q$, we have  $N-\frac{N-2}{2}q<N-2$ then for $\e$ sufficiently small,  $C_1\e^{N-2} -\frac{\la T_1^q}{q} C_2\e^{N-\frac{N-2}{2}q}<0$. Therefore,
		\begin{align*}
		\sup_{t\geq 0} J_\la(t u_\e)= \sup_{t\geq 0}\mathcal{G}(t)< \frac{N-\mu+2}{2(2N-\mu)}S_{H,L}^{\frac{2N-\mu}{N-\mu+2}}.  
		\end{align*}
	\textbf{Case 2: } If $q=2$ and $N>3$.\\
	When $N>4$ then by Proposition \ref{luprop3} and \eqref{lu2}, 
		\begin{align*}
	\sup_{t\geq 0}\mathcal{G}(t)&    \leq \frac{N-\mu+2}{2(2N-\mu)}S_{H,L}^{\frac{2N-\mu}{N-\mu+2}}+C_1\e^{N-2} -  \frac{\la T_1^2}{2} C_2\e^{2}.
	\end{align*}
	Therefore,  for $\e$ sufficiently small,  $C_1\e^{N-2} -\frac{\la T_1^2}{2} C_2\e^{2}<0$, we obtain 
	\begin{align*}
	\sup_{t\geq 0} J_\la(t u_\e)< \frac{N-\mu+2}{2(2N-\mu)}S_{H,L}^{\frac{2N-\mu}{N-\mu+2}}.  
	\end{align*}
	When $N=4$ then again  by Proposition \ref{luprop3} and \eqref{lu2}, for an appropriate constant $C_3>0$, we have  
	\begin{align*}
	\sup_{t\geq 0}\mathcal{G}(t)&    \leq \frac{N-\mu+2}{2(2N-\mu)}S_{H,L}^{\frac{2N-\mu}{N-\mu+2}}+C_1\e^{2} -  \frac{\la T_1^2}{2} C_2(\e^2 |\log \e|+\e^{2})\\
	& \leq \frac{N-\mu+2}{2(2N-\mu)}S_{H,L}^{\frac{2N-\mu}{N-\mu+2}}+C_3\e^{2} -  \frac{\la T_1^2}{2} C_2\e^2 |\log \e|.
	\end{align*}
Since  $|\log \e|\ra \infty$ as $\e \ra 0$,   for $\e$ sufficiently small,  $C_3\e^{2} -  \frac{\la T_1^2}{2} C_2\e^2 |\log \e|<0$.
	Thus
	\begin{align*}
	\sup_{t\geq 0} J_\la(t u_\e)< \frac{N-\mu+2}{2(2N-\mu)}S_{H,L}^{\frac{2N-\mu}{N-\mu+2}}.  
	\end{align*}

	\QED
\end{proof}

\begin{Lemma}\label{lulem4}
	If condition $\mathcal{(Q)}$ holds then  the following holds. 
	\begin{enumerate}
		\item [(a)]  $\widehat{\theta}_\la= \theta_\la$.
		\item [(b)] $0<\theta_\la< \frac{N-\mu+2}{2(2N-\mu)}S_{H,L}^{\frac{2N-\mu}{N-\mu+2}}$. 
		\item [(c)] 	There exists $u_\la^\Om \in N_\la^{\Om}$ such that $J_\la(u_\la^\Om) = \ds \inf_{u \in N_\la^{\Om}}J_\la(u) = \theta_\la$ and $u_\la^\Om \geq 0$.
	\end{enumerate} 
\end{Lemma}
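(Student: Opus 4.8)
\textbf{Proof proposal for Lemma \ref{lulem4}.}

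The plan is to establish the three assertions in the order (a), then (b), then (c), since each builds on the previous one. For part (a), the equality $\widehat{\theta}_\la = \theta_\la$ follows from the structure of the fibering map $\phi_u(t) = J_\la(tu)$ analysed in Lemma \ref{lulem8}: for every $u \in H_0^1(\Om)\setminus\{0\}$ there is a unique $t_0 = t_0(u) > 0$ with $t_0 u \in N_\la^\Om$, and this $t_0$ is precisely the point where $\phi_u$ attains its maximum on $[0,\infty)$ (because $\phi_u$ increases on $(0,t_0)$ and decreases on $(t_0,\infty)$, as $b_u$ is strictly increasing with $b_u(t_0)=\|u\|^2$). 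Hence $\sup_{t\ge 0} J_\la(tu) = J_\la(t_0(u)u)$, and taking the infimum over $u\in H_0^1(\Om)\setminus\{0\}$ on the left gives $\widehat{\theta}_\la$, while on the right the map $u \mapsto t_0(u)u$ ranges over all of $N_\la^\Om$, so the infimum is $\theta_\la$. I would present this as a short two-line argument invoking Lemma \ref{lulem8}.

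For part (b), the lower bound $\theta_\la > 0$ is already contained in the last display of the proof of Lemma \ref{lulem8}, where it is shown that for $u\in N_\la^\Om$ one has $J_\la(u) = (\tfrac12-\tfrac1q)\int_\Om|u|^q + (\tfrac12-\tfrac1{2\cdot2^*_\mu})\|u\|_{NL}^{2\cdot2^*_\mu}$, together with a uniform positive lower bound coming from the fact that elements of $N_\la^\Om$ are bounded away from $0$ in norm (which follows from Lemma \ref{efflem1}(i): since $J_\la(u)\ge\alpha$ forces $\|u\|$ away from small values, or more directly from the Nehari identity combined with Sobolev and Hardy--Littlewood--Sobolev inequalities). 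The upper bound is immediate: combining part (a) with Lemma \ref{lulem3} gives $\theta_\la = \widehat{\theta}_\la < \frac{N-\mu+2}{2(2N-\mu)}S_{H,L}^{\frac{2N-\mu}{N-\mu+2}}$.

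For part (c), which I expect to be the main obstacle, the strategy is the standard minimization-on-the-Nehari-manifold argument combined with the Palais--Smale analysis. Take a minimizing sequence $\{u_n\}\subset N_\la^\Om$ with $J_\la(u_n)\to\theta_\la$; by Ekeland's variational principle applied on the $C^1$ manifold $N_\la^\Om$ one may assume it is a Palais--Smale sequence for $J_\la$ constrained to $N_\la^\Om$ at the level $\theta_\la$. Since by part (b) we have $\theta_\la < \frac{N-\mu+2}{2(2N-\mu)}S_{H,L}^{\frac{2N-\mu}{N-\mu+2}}$, Lemma \ref{lulem14} applies and yields a subsequence converging strongly in $H_0^1(\Om)$ to some $u_\la^\Om \in H_0^1(\Om)$; strong convergence gives $u_\la^\Om \in N_\la^\Om$ (the Nehari constraint passes to the limit, and $u_\la^\Om\ne 0$ because $\theta_\la>0$ rules out the zero limit) and $J_\la(u_\la^\Om) = \theta_\la$. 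By Lemma \ref{lulem12} the constrained minimizer is a genuine critical point of $J_\la$ on $H_0^1(\Om)$. Finally, to get $u_\la^\Om \ge 0$, I would run the whole argument with $J_\la$ replaced by the functional $u\mapsto J_\la$ evaluated with $|u|$ in the nonlinear terms, or equivalently observe that $J_\la(|u|) \le J_\la(u)$ for all $u$ (since $\||u|\| = \|u\|$, $\int_\Om||u||^q = \int_\Om|u|^q$, and the nonlocal term is unchanged), so replacing the minimizing sequence by $\{|u_n|\}$ produces a nonnegative minimizer; alternatively, write $u = u^+ - u^-$ and use the Nehari structure to show $u^- \equiv 0$. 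The delicate point throughout is ensuring that the strong-convergence threshold in Lemma \ref{lulem14} is genuinely strict, which is exactly what Lemma \ref{lulem3} (and hence the condition $\mathcal{(Q)}$ on $q$ and $N$) guarantees.
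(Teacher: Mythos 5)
Your proposal is correct, but it reaches the conclusion by a genuinely different route than the paper for the two substantive points. For (a) you prove $\widehat{\theta}_\la=\theta_\la$ purely from the fibering-map structure of Lemma \ref{lulem8}: since the unique critical point $t_0(u)$ of $t\mapsto J_\la(tu)$ is a global maximum and $u\mapsto t_0(u)u$ maps onto $N_\la^\Om$, the two infima coincide by inspection. The paper instead gets the inequality $\theta_\la\le\widehat{\theta}_\la$ by invoking the Mountain Pass Theorem (via Lemmas \ref{efflem1}, \ref{lulem2} and \ref{lulem3}) to produce an unconstrained critical point at level $\widehat{\theta}_\la$, which necessarily lies in $N_\la^\Om$; the reverse inequality is the same fibering argument you use. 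The payoff of the paper's heavier approach is that part (c) is then immediate: the mountain-pass critical point already realizes $\theta_\la$ on $N_\la^\Om$. Because your (a) avoids the MPT, you must supply a separate existence argument for (c), which you do correctly by constrained minimization: Ekeland on $N_\la^\Om$ gives a constrained Palais--Smale sequence at level $\theta_\la$, Lemma \ref{lulem14} (rather than Lemma \ref{lulem2}) gives strong convergence below the threshold, and $\theta_\la>0$ excludes the trivial limit allowed in the $\kappa=0$ case of that lemma. This route is slightly longer but more elementary in (a) and makes explicit where the strict energy bound of Lemma \ref{lulem3} is used; it does require, as does the paper's Lemma \ref{lulem14}, the standard verification that $N_\la^\Om$ is a $C^1$ natural constraint ($T_\la'(u)\neq 0$ on $N_\la^\Om$), which is routine here. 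Your treatment of (b) and of the nonnegativity via $|u_\la^\Om|$ (noting $\|\nabla|u|\|_{L^2}=\|\nabla u\|_{L^2}$ so that in fact $J_\la(|u|)=J_\la(u)$) matches the paper, and you correctly observe that the positivity of $\theta_\la$ needs a uniform lower bound on $\|u\|$ over $N_\la^\Om$, a point the paper's Lemma \ref{lulem8} glosses over.
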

\begin{proof}
\begin{enumerate}
	\item [(a)]By Lemma \ref{lulem2}, Lemma \ref{lulem3}, Lemma \ref{efflem1} and  Mountain Pass Lemma, there exists a $u_\la^\Om \in H_0^1(\Om)$ such that $J_\la(u_\la^\Om)= \widehat{\theta}_\la$ and $J_\la^\prime(u_\la^\Om)=0$. It implies $u_\la^\Om \in N_\la^\Om$. Hence, $\theta_\la \leq J_\la(u_\la^\Om)= \widehat{\theta}_\la$. Also  from Lemma \ref{lulem8}, for   each  $v \in N_\la^\Om$, there exists a unique $t_0>0$ such that    $\ds \sup_{ t\geq 0}J_\la(tv)= J_\la(t_0v)$. Since $u_\la^\Om \in N_\la^\Om$, it implies  $\widehat{\theta}_\la \leq \ds \sup_{ t\geq 0}J_\la(tu)= J_\la(u)$. Therefore, $\widehat{\theta}_\la \leq \theta_\la$. 
	\item [(b)]By Lemma \ref{lulem8},  $\theta_\la>0$ and by Lemma \ref{lulem3}, $\theta_\la = \widehat{\theta}_\la < \frac{N-\mu+2}{2(2N-\mu)}S_{H,L}^{\frac{2N-\mu}{N-\mu+2}}$. 
	\item [(c)] By part (a), there exists a $u_\la^\Om \in N_\la^{\Om}$ such that $  J_\la(u_\la^\Om)= \widehat{\theta}_\la= \theta_\la=\ds \inf_{u \in N_\la^{\Om}}J_\la(u)$. Since $ J_\la(u_\la^\Om)=  J_\la(|u_\la^\Om|)$, we can assume $u_\la^\Om\geq0$.  \QED
\end{enumerate} 
	\end{proof}
\section{Proof of Theorem \ref{luthm1}}
In this section, first we  gather  some information which is needed to estimate the $\text{cat}_\Om(\Om)$. Before that, we prove some Lemmas which are necessary for the proof of Theorem \ref{luthm1}.  	

\begin{Lemma}\label{lulem7}
	Let $N\geq 3$ and $\{ u_n\}$ be a sequence in $H_0^1(\Om)$ such that
	\begin{align*}
	\|u_n\|_{NL}^{2\cdot 2^{*}_{\mu}}= 
	\|u_n\|^2\leq  S_{H,L}^{\frac{2N-\mu}{N-\mu+2}}+o_n(1) \text{ as } n \ra  \infty.\end{align*}
	Then, there exist  sequences $z_n\in \mathbb{R}^N$ and $\al_n\in \mathbb{R}^+$ such that  the sequence 
	\begin{align*}
	v_n (x)= \al_n^{\frac{N-2}{2}} u_n(\al_nx+z_n) 
	\end{align*}
	have a convergent subsequence, still denoted by $v_n$. Moreover, $v_n \ra v \not \equiv 0 $ in $D^{1,2}(\mathbb{R}^N),\; z_n \ra z \in \overline{\Om}$ and  $\al_n  \ra 0$ as $n \ra \infty$. 
\end{Lemma}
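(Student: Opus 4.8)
\textbf{Proof proposal for Lemma \ref{lulem7}.}

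The plan is to run the concentration-compactness machinery on the sequence $\{u_n\}$, exploiting the hypothesis that it is an (almost) optimizing sequence for $S_{H,L}$. First I would regard $\{u_n\}$ as a bounded sequence in $D^{1,2}(\mathbb{R}^N)$ (after extending by zero outside $\Om$), since $\|u_n\|^2$ is bounded by the hypothesis. The equality of the two quantities $\|u_n\|_{NL}^{2\cdot 2^{*}_{\mu}} = \|u_n\|^2 \le S_{H,L}^{\frac{2N-\mu}{N-\mu+2}} + o_n(1)$ together with the definition \eqref{nh5} of $S_{H,L}$ forces $\{u_n\}$ to be a minimizing sequence, in the sense that $\|u_n\|^2 / \|u_n\|_{NL}^{2} \to S_{H,L}$. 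To this minimizing sequence I apply the concentration-compactness principle of P.-L. Lions, adapted to the Hardy--Littlewood--Sobolev nonlinearity (this is precisely the kind of analysis carried out in the appendix for the best constant $S_{H,L}$, so I would invoke that). The principle produces, after rescaling, a choice of scaling parameters $\al_n > 0$ and translations $z_n \in \mathbb{R}^N$ so that the rescaled functions $v_n(x) = \al_n^{\frac{N-2}{2}} u_n(\al_n x + z_n)$ do not vanish and do not split: vanishing is ruled out because $\|v_n\|_{NL}$ stays bounded below away from $0$ (the nonlocal norm is scale-invariant and equals $1$ after normalization), and dichotomy is ruled out by the strict subadditivity of the map $S_{H,L}$-type functional, i.e. one cannot share mass between two bubbles without strictly increasing the quotient. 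Hence compactness holds: $v_n \rightharpoonup v$ in $D^{1,2}(\mathbb{R}^N)$ with $v \not\equiv 0$, and by the Brezis--Lieb-type splitting (as already used in the proof of Lemma \ref{lulem2}) the convergence is in fact strong in $D^{1,2}(\mathbb{R}^N)$, with $v$ an exact minimizer of $S_{H,L}$; by Lemma \ref{lulem13}, $v$ is (a rescaling of) the Talenti-type bubble.

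It then remains to locate the parameters. The key point is that $u_n$ is supported in $\Om$ (bounded), while the limiting profile $v$ lives on all of $\mathbb{R}^N$ with a fixed (non-degenerate) concentration scale. If $\al_n$ did not tend to $0$, then along a subsequence $\al_n \to \al_\infty > 0$ (it cannot go to $+\infty$ either, again because the support of $u_n$ is confined to the bounded set $\Om$, so an unbounded dilation would push all the mass off to infinity, contradicting $v \not\equiv 0$ after the change of variables — one checks that $v_n$ would then be supported in the shrinking set $\al_n^{-1}(\Om - z_n)$ and the mass would escape). With $\al_n \to \al_\infty > 0$ the functions $v_n$ are supported in $\al_n^{-1}(\Om - z_n)$, a bounded set, so $v$ has bounded support; but a minimizer of $S_{H,L}$ on $D^{1,2}(\mathbb{R}^N)$ is the everywhere-positive bubble from Lemma \ref{lulem13}, which has unbounded support — contradiction. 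Hence $\al_n \to 0$. Finally, to see $z_n \to z \in \overline{\Om}$: the concentration point of the bubble (say the origin, after normalizing $v$) must be an interior accumulation point of $\al_n^{-1}(\Om - z_n)$, which combined with $\al_n \to 0$ forces $z_n$ to approach $\Om$; passing to a further subsequence, $z_n \to z$, and since $u_n$ is supported in $\Om$ we get $z \in \overline{\Om}$.

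The main obstacle, I expect, is the bookkeeping in the concentration-compactness step — specifically, pinning down the correct rescaling $\al_n$ (the "level" at which the sequence concentrates) and showing rigorously that the rescaled sequence neither vanishes nor splits. This requires the strict subadditivity / strict monotonicity properties of the nonlocal best constant, which is exactly the content the paper defers to the appendix; I would cite that and the analogous arguments in \cite{yang} and \cite{willem}. The two geometric conclusions ($\al_n \to 0$ and $z_n \to z \in \overline{\Om}$) are then comparatively soft consequences of the boundedness of $\Om$ versus the unbounded support of the limiting bubble. A clean way to organize the whole argument is: (1) normalize so that $\|v_n\|_{NL} = 1$ and $\|v_n\|^2 \to S_{H,L}$; (2) apply concentration-compactness to get strong convergence to a minimizer $v$ after translating and dilating; (3) use Lemma \ref{lulem13} to identify $v$; (4) deduce $\al_n \to 0$ and $z_n \to z \in \overline{\Om}$ from the containment $\mathrm{supp}\, v_n \subset \al_n^{-1}(\Om - z_n)$.
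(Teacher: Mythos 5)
Your proposal is correct and follows essentially the same route as the paper: the paper simply normalizes $w_n = u_n/\|u_n\|_{NL}$, checks that $\|w_n\|_{NL}=1$ and $\|w_n\|^2\to S_{H,L}$ (squeezed between the definition of $S_{H,L}$ and the hypothesis), and then invokes Proposition \ref{luprop2} from the appendix, which is exactly the concentration-compactness statement you describe. The additional details you sketch (ruling out vanishing/dichotomy, identifying the limit via Lemma \ref{lulem13}, and deducing $\al_n\to 0$ and $z_n\to z\in\overline{\Om}$ from the support constraint) are precisely the content of that appendix proposition rather than of the lemma's own proof.
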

\begin{proof}
	Let  $\{ w_n\}$ be a sequence such that $w_n= \ds \frac{u_n}{\|u_n\|_{NL}}$ then $\|w_n\|_{NL}=1, \; \|w_n\|^2=  \ds \frac{\|u_n\|^2}{\|u_n\|_{NL}^2}= \|u_n\|^{2(\frac{N-\mu+2}{2N-\mu})}\leq S_{H,L}+o_n(1)$. By definition of $S_{H,L}$, $ \|w_n\|^2\geq S_{H,L} $, it implies $\|w_n\|^2 \ra S_{H,L}$ as $n\ra \infty$. Now using Proposition \ref{luprop2} for the sequence $\{w_n\}$, we have the desired result.\QED
\end{proof}
Since $\Om$ is a smooth bounded domain of $\mathbb{R}^N$, thus we can pick $\de>0$ small enough so that 
\begin{align*}
\Om_\de^+=\{x \in \mathbb{R}^N \;|\;  \text{dist}(x,\Om)< \de \} \quad \text{and} \quad \Om_\de^-=\{x \in \mathbb{R}^N \;|\;  \text{dist}(x,\Om)> \de \}
\end{align*}
are homotopically equivalent to $\Om$. Without loss of generality, we can assume that $B_\de= B_\de(0) \subset \Om$. Consequently, we consider the functional $J_{\la}^{B_\de}: H_{0,\text{rad}}^1(B_\de)\ra \mathbb{R}^N$ defined as 
\begin{align*}
J_{\la}^{B_\de}(u)= \frac{1}{2}\int_{B_\de} |\na u|^2~dx -\frac{\la}{q}\int_{B_\de}|u|^q- \frac{1}{2\cdot 2^{*}_{\mu}}\int_{B_\de}\int_{B_\de}\frac{|u(x)|^{2^{*}_{\mu}}|u(y)|^{2^{*}_{\mu}}}{|x-y|^{\mu}}~dydx, 
\end{align*}
where $H_{0,\text{rad}}^1(B_\de)= \{ u \in H_{0}^1(B_\de): u \text{ is radial} \}.$
And let $N_\la^{B_\de}$  be the Nehari manifold associated to  functional $J_{\la}^{B_\de}$. Then all the results obtained in Section 3  are valid for the functional $J_{\la}^{B_\de}$. In particular, by Lemma  \ref{lulem4}, we know that there exists $u_\la^{B_\de} \in N_\la^{B_\de}$ such that $u_\la^{B_\de}\geq 0$ in $B_\de$. Moreover,
\begin{align}\label{lu5}
J_{\la}^{B_\de}(u_\la^{B_\de} ) =\inf_{u \in N_\la^{B_\de}}J_{\la}^{B_\de}(u )< \frac{N-\mu+2}{2(2N-\mu)}S_{H,L}^{\frac{2N-\mu}{N-\mu+2}}.
\end{align}
 Now with the help of $u_\la^{B_\de}$ we will define the following set 
 \begin{align*}
 \mathcal{A}_\la= \{ u \in N_\la^\Om : J_\la(u)\leq  J_{\la}^{B_\de}(u_\la^{B_\de} ) \}, 
 \end{align*} 
 and the function $\phi_\la: \Om_\de^- \ra \mathcal{A}_\la$ given by 
\begin{align}\label{lu6}
\left\{
\begin{array}{ll}
u_\la^{B_\de}(x-z),  &  \text{ if } x \in B_\de(z), \\
0, &  \text{ elsewhere }. \\
\end{array} 
\right.
 \end{align}	
In the succession, we define the barycenter mapping $\beta:N_\la^{\Om} \ra \mathbb{R}^N$ by setting 
\begin{align}\label{lu16}
\beta(u)=  \frac{\ds \int_{\Om}\int_{\Om}\frac{x|u(x)|^{2^{*}_{\mu}}|u(y)|^{2^{*}_{\mu}}}{|x-y|^{\mu}}~dydx}{\ds \|u\|_{NL}^{2\cdot 2^*_{\mu}} }
\end{align}
Using the fact that  $u_\la^{B_\de} $ is radial,  $\beta(\phi_\la(z))= z$ for all $z \in \Om_\de^-$.

\begin{Lemma}\label{lulem9}
	Let $N\geq 3$ and $q \in [2,2^*)$. Then there exists $\Upsilon^*>0 $ such that  if $u \in \mathcal{A}_\la$ and $ \la \in (0,\Upsilon^*)$ then $\beta(u) \in \Om_\de^+$.
\end{Lemma}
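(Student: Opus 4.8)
The plan is to argue by contradiction, along the classical barycenter scheme (cf.\ \cite{rey}). Suppose the conclusion fails: then there exist $\la_n\ra0^+$ and $u_n\in\mathcal{A}_{\la_n}$ with $\beta(u_n)\notin\Om_\de^+$ for every $n$. The whole point will be to show that such a sequence must concentrate at a single point $z\in\overline\Om$, so that $\beta(u_n)\ra z\in\overline\Om\subset\Om_\de^+$; since $\Om_\de^+$ is open this contradicts $\beta(u_n)\notin\Om_\de^+$, and one then takes for $\Upsilon^*$ the threshold below which the argument runs.

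\emph{Step 1 (energy and norm estimates).} Write $c^*:=\frac{N-\mu+2}{2(2N-\mu)}S_{H,L}^{\frac{2N-\mu}{N-\mu+2}}$. Since $u_n\in N_{\la_n}^{\Om}$, the Nehari reduction used in Lemma~\ref{lulem8} gives
\[
J_{\la_n}(u_n)=\Big(\tfrac12-\tfrac1q\Big)\la_n\int_{\Om}|u_n|^q\,dx+\frac{N-\mu+2}{2(2N-\mu)}\|u_n\|_{NL}^{2\cdot 2^*_\mu},
\]
and membership in $\mathcal{A}_{\la_n}$ together with \eqref{lu5} forces this quantity to be $<c^*$. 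As both summands are nonnegative, we immediately get $\|u_n\|_{NL}^{2\cdot 2^*_\mu}\le S_{H,L}^{\frac{2N-\mu}{N-\mu+2}}$ for all $n$, and, when $q\in(2,2^*)$, also a uniform bound on $\la_n\int_\Om|u_n|^q\,dx$; when $q=2$ one uses instead $\la_n<\la_1$ and the boundedness estimate of Lemma~\ref{lulem14} to bound $\|u_n\|$. Either way $\|u_n\|$ is bounded, so $\la_n\int_\Om|u_n|^q\,dx\ra0$ because $\la_n\ra0$, and the Nehari identity $\|u_n\|^2=\la_n\int_\Om|u_n|^q\,dx+\|u_n\|_{NL}^{2\cdot 2^*_\mu}$ yields $\|u_n\|^2=\|u_n\|_{NL}^{2\cdot 2^*_\mu}+o_n(1)$. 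Feeding this into the Hardy--Littlewood--Sobolev/Sobolev inequality $\|u_n\|^2\ge S_{H,L}\big(\|u_n\|_{NL}^{2\cdot 2^*_\mu}\big)^{1/2^*_\mu}$ and using the elementary Nehari lower bound $\|u_n\|\ge c>0$, a short computation gives $\|u_n\|^2\ra S_{H,L}^{\frac{2N-\mu}{N-\mu+2}}$ and $\|u_n\|_{NL}^{2\cdot 2^*_\mu}\ra S_{H,L}^{\frac{2N-\mu}{N-\mu+2}}$.

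\emph{Step 2 (rescaling and the barycenter identity).} Now the hypothesis of Lemma~\ref{lulem7} is met (if the exact equality $\|\cdot\|_{NL}^{2\cdot 2^*_\mu}=\|\cdot\|^2$ is needed there, replace $u_n$ by $t_nu_n$ with $t_n\ra1$; this leaves $\beta$ unchanged since $\beta$ is scale invariant). Hence there are $z_n\ra z\in\overline\Om$ and $\al_n\ra0$ with $v_n(x):=\al_n^{(N-2)/2}u_n(\al_nx+z_n)\ra v\not\equiv0$ strongly in $D^{1,2}(\R^N)$. Changing variables $x=\al_n\xi+z_n$, $y=\al_n\zeta+z_n$ in both integrals of \eqref{lu16}, and using $\frac{N-2}{2}\cdot 2^*_\mu=\frac{2N-\mu}{2}$ so that every power of $\al_n$ cancels, one obtains the exact identity
\[
\beta(u_n)=z_n+\al_n\,\Gamma(v_n),\qquad
\Gamma(v_n):=\frac{\ds\int_{\R^N}\!\int_{\R^N}\frac{\xi\,|v_n(\xi)|^{2^*_\mu}|v_n(\zeta)|^{2^*_\mu}}{|\xi-\zeta|^{\mu}}\,d\zeta\,d\xi}{\ds\|v_n\|_{NL}^{2\cdot 2^*_\mu}}.
\]
Since $\al_n\ra0$, it now suffices to show that $\Gamma(v_n)$ stays bounded; then $\beta(u_n)\ra z\in\overline\Om\subset\Om_\de^+$, the desired contradiction.

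\emph{The main obstacle} is precisely the boundedness of $\Gamma(v_n)$, i.e.\ the control of the first moment of the nonlocal density $\big(|\cdot|^{-\mu}*|v_n|^{2^*_\mu}\big)|v_n|^{2^*_\mu}$. Strong convergence $v_n\ra v$ in $D^{1,2}(\R^N)$ gives $v_n\ra v$ in $L^{2^*}(\R^N)$, hence $\|v_n\|_{NL}^{2\cdot 2^*_\mu}\ra\|v\|_{NL}^{2\cdot 2^*_\mu}>0$ and no loss of mass of $|v_n|^{2^*}\,dx$ at infinity; but the weight $\xi$ in the numerator is unbounded, so one genuinely needs tightness of these densities. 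I would extract it from the concentration procedure behind Lemma~\ref{lulem7} (the concentration-function/Lions argument of Proposition~\ref{luprop2}), which keeps the mass of $v_n$ inside a fixed ball up to arbitrarily small tails, combined with the fact that the limiting Aubin--Talenti profile $v$ decays like $|x|^{-(N-2)}$ and hence has finite first moment against its own nonlocal density; together these give $\Gamma(v_n)\ra\Gamma(v)$, in particular boundedness. The rest — the case distinction $q=2$ versus $q\in(2,2^*)$ in Step~1 and the change-of-variables bookkeeping in Step~2 — is routine; the tightness/decay estimate is where the real work sits.
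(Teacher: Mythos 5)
Your overall architecture (contradiction, Nehari/energy estimates forcing $\|t_nu_n\|^2=\|t_nu_n\|_{NL}^{2\cdot 2^*_\mu}\le S_{H,L}^{\frac{2N-\mu}{N-\mu+2}}+o_n(1)$, then rescaling via Lemma \ref{lulem7}) matches the paper's, and Step 1 is essentially the paper's Claims 1--2. The divergence, and the genuine gap, is at the crux: you reduce everything to showing $\al_n\Gamma(v_n)\ra 0$ and then only gesture at a tightness/decay argument. As written, the mechanism you invoke is not sufficient: strong convergence $v_n\ra v$ in $D^{1,2}(\R^N)$ gives $v_n\ra v$ in $L^{2^*}(\R^N)$, but this does not control first moments, and the decay of the limiting Aubin--Talenti profile $v$ says nothing about the unbounded tails of $v_n$ itself. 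Your route can be closed, but it needs two ingredients you do not state: (i) the trivial support bound $|\xi|\le C/\al_n$ for $\xi\in\mathrm{supp}\,v_n=(\Om-z_n)/\al_n$, and (ii) \emph{uniform} tightness of the nonlocal densities, i.e.\ for every $\e>0$ an $R$ with $\int_{|\xi|>R}\bigl(|\cdot|^{-\mu}*|v_n|^{2^*_\mu}\bigr)|v_n|^{2^*_\mu}\,d\xi<\e$ for all large $n$ (which does follow from $L^{2^*}$-convergence plus Hardy--Littlewood--Sobolev). Combining (i) and (ii) gives $\al_n|\Gamma(v_n)|\le \al_nR+C\e$, hence the conclusion; but this is precisely ``where the real work sits,'' and you leave it unproved.

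The paper sidesteps all of this with a short device you should note: fix $\psi\in C_c^{\infty}(\R^N)$ with $\psi(x)=x$ on $\overline\Om$. Since $u_n$ vanishes outside $\Om$, replacing the weight $x$ by $\psi(x)$ in \eqref{lu16} does not change $\beta(u_n)$; after the change of variables the weight becomes $\psi(\al_n\xi+z_n)$, which is uniformly bounded and converges pointwise to $\psi(z)=z$, so Lebesgue dominated convergence immediately yields $\beta(u_n)\ra z\in\overline\Om\subset\Om_\de^+$. No moment or tightness estimate is needed. Your change-of-variables identity $\beta(u_n)=z_n+\al_n\Gamma(v_n)$ is correct (the powers of $\al_n$ do cancel since $\tfrac{N-2}{2}\cdot 2^*_\mu=\tfrac{2N-\mu}{2}$), but it manufactures a difficulty the cutoff trick avoids.
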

\begin{proof}
	On the contrary, let there exists  sequences $\{\la_n\} \in \mathbb{R}^+$  and $u_n \in \mathcal{A}_{\la_n}$ such that  $ \la_n  \ra 0$  and $\beta(u_n) \not \in  \Om_\de^+$.  Using the definition of $\mathcal{A}_{\la_n}$, we have $u_n \in N_{\la_n}^\Om$  and $ J_{\la_n}(u_n)\leq  J_{\la_n}^{B_\de}(u_{\la_n}^{B_\de} )$. Define
\begin{align*}
M(t)= 	J_{\la_n}(tu_n) =  \frac{t^2}{2} \|u_n\|^2 -\frac{\la_n t^q}{q} \int_{\Om}|u_n|^q~dx-  \frac{t^{2\cdot 2^{*}_{\mu}}}{2\cdot 2^{*}_{\mu}}\|u_n\|_{NL}^{2\cdot 2^{*}_{\mu}},
\end{align*}
using the same assertions and arguments as in Lemma \ref{lulem8}, there exists a unique $t_0>0$ such that $M^\prime(t_0)=0$ and $t_0u_n \in N_{\la_n}^\Om$. Since $u_n \in N_{\la_n}^\Om$, it implies that  $M^\prime(1)=0$ and $M$ is increasing for $t<1$ and decreasing $t>1$. Therefore, 
\begin{align}\label{lu7}
J_{\la_n}(u_n)= \sup_{ t\geq 0} J_{\la_n}(tu_n) .
\end{align}
As $ \|u_n\|^2 -\la_n \int_{\Om}|u_n|^q~dx-  \|u_n\|_{NL}^{2\cdot 2^{*}_{\mu}}=0$, employing this with definition of $S_{H,L}$ and Sobolev embedding, we have 
\begin{align*}
1= & \frac{\la_n \int_{\Om}|u_n|^q~dx}{\|u_n\|^2}+ \frac{\|u_n\|_{NL}^{2\cdot 2^{*}_{\mu}}}{\|u_n\|^2} \leq \la_n  c_1\|u_n\|^{q-2}+S_{H,L}^{-2^*_{\mu}}\|u_n\|^{2\cdot 2^{*}_{\mu}-2},
\end{align*}
where $c_1>0$ is a appropriate  constant. It implies that for large $n$, there exists a constant $C>0$ such that 
\begin{align}\label{lu8}
\|u_n\|>C. 
\end{align}
\textbf{Claim 1: } There exists a $l>0$ such that up to a subsequence $\|u_n\|_{NL}^{2\cdot 2^{*}_{\mu}} \ra l$ as $n\ra \infty$. \\
Since  $J_{\la_n}(u_n) \leq  J_{\la_n}^{B_\de}(u_{\la_n}^{B_\de} )< \frac{N-\mu+2}{2(2N-\mu)}S_{H,L}^{\frac{2N-\mu}{N-\mu+2}}$, $J_{\la_n}(u_n)$ is bounded in $\mathbb{R}$, subsequently $\|u_n\|_{NL}$ is a bounded sequence. Moreover, from the fact that $u_n \in N_{\la_n}^\Om$, it follows that
\begin{align*}
J_{\la_n}(u_n) = \la_n \left( \frac{1}{2}-\frac{1}{q} \right)\int_{\Om}|u_n|^q~dx+ \left( \frac{1}{2}-\frac{1}{2\cdot 2^{*}_{\mu}} \right)\|u_n\|_{NL}^{2\cdot 2^{*}_{\mu}}, 
\end{align*}
It implies that  $\la_n \int_{\Om}|u_n|^q~dx$ is a bounded sequence. As a consequence, $\|u_n\|$ is bounded in $\mathbb{R}$. Therefore, there exists a $l\geq 0$  such that $\|u_n\|_{NL} \ra l$ as $n \ra \infty$. To prove the Claim 1, it is enough to show that $l\not=0$. Using \eqref{lu8}, we deduce 
\begin{align*}
\|u_n\|_{NL}^{2\cdot 2^{*}_{\mu}} & = \|u_n\|^2- \la_n \int_{\Om}|u_n|^q~dx \geq \|u_n\|^2- \la_n c_1\|u_n\|^q \geq C^2- \la_n c_2, 
\end{align*}
where $c_2>0$ is a suitable constant. Since $ \la_n \ra 0$, so we have $l>0$. This proves Claim 1.\\
\textbf{Claim 2:} For all $n\in \mathbb{N}$, there exists $t_n>0$ such that $\|t_n u_n\|^2 = \|t_n u_n\|_{NL}^{2\cdot 2^{*}_{\mu}}$. Furthermore, $t_n$ is a bounded sequence in $\mathbb{R}$. \\
Assume $t_n = \left[\frac{\|u_n\|^2}{\|u_n\|_{NL}^{2\cdot 2^{*}_{\mu}}} \right]^{\frac{1}{2\cdot 2^*_{\mu}-2}}$ then $\|t_n u_n\|^2 = \|t_n u_n\|_{NL}^{2\cdot 2^{*}_{\mu}}  \text{ for all } n \in \mathbb{N}$. Using the fact that $\|u_n\|$ is bounded and by Claim 1, we deduce that  $t_n$ is a bounded sequence in $\mathbb{R}$, concludes the proof of Claim 2. \\
By the definition of $J_{\la_n}$ and  taking into account  \eqref{lu5}, \eqref{lu7}, Claim 2, $u_n\in \mathcal{A}_{\la_n}$, $\la_n \ra 0$, and  $\int_{\Om}|u_n|^q~dx $ is bounded, we obtain
\begin{align*}
\frac{N-\mu+2}{2(2N-\mu)} \|t_n u_n\|^2 & = J_{\la_n}(t_nu_n)+ \la_n t_n^q \int_{\Om}|u_n|^q~dx \\
& \leq J_{\la_n}(u_n)+ o_n(1)\\
&\leq  J_{\la_n}^{B_\de}(u_{\la_n}^{B_\de} )+ o_n(1) < \frac{N-\mu+2}{2(2N-\mu)}S_{H,L}^{\frac{2N-\mu}{N-\mu+2}}+ o_n(1). 
\end{align*}
From Claim 2  and Lemma \ref{lulem7}, there exists a  sequences $z_n\in \mathbb{R}^N$ and $\al_n\in \mathbb{R}^+$ such that  the sequence 
\begin{align*}
v_n (x)= \al_n^{\frac{N-2}{2}} t_n u_n(\al_nx+z_n) 
\end{align*}
have a convergent subsequence, still denoted by $v_n$. Moreover, $v_n \ra v \not \equiv 0 $ in $D^{1,2}(\mathbb{R}^N),\; z_n \ra z \in \overline{\Om}$ and  $\al_n  \ra 0$ as $n \ra \infty$. 
Let $\psi \in C_c^{\infty}(\mathbb{R}^N)$ such that $\psi(x)=x $ for all $x \in \overline{\Om}$.  Consider 
\begin{align*}
\beta(u_n)= \beta(t_nu_n) =&  \frac{\ds \int_{\mathbb{R}^N}\int_{\mathbb{R}^N}\frac{\psi(x)|u_n(x)|^{2^{*}_{\mu}}|u_n(y)|^{2^{*}_{\mu}}}{|x-y|^{\mu}}~dydx}{\ds \int_{\mathbb{R}^N}\int_{\mathbb{R}^N}\frac{|u_n(x)|^{2^{*}_{\mu}}|u_n(y)|^{2^{*}_{\mu}}}{|x-y|^{\mu}}~dydx }\\
& = \frac{\ds \int_{\mathbb{R}^N}\int_{\mathbb{R}^N}\frac{\psi(\al_nx+z_n)|v_n(x)|^{2^{*}_{\mu}}|v_n(y)|^{2^{*}_{\mu}}}{|x-y|^{\mu}}~dydx}{\ds \int_{\mathbb{R}^N}\int_{\mathbb{R}^N}\frac{|v_n(x)|^{2^{*}_{\mu}}|v_n(y)|^{2^{*}_{\mu}}}{|x-y|^{\mu}}~dydx }\\
& \ra z \in \overline{\Om},
\end{align*}
where the last one follows from regularity of $\psi$ and Lebesgue dominated theorem. This  contradicts the assumption $\beta(u_n) \not \in  \Om_\de^+$. It concludes the proof. \QED
\end{proof}
\begin{Lemma}\label{lulem10}
Assume  $N\geq 3,\; q \in [2,2^*)$ and $\la\in (0,\Upsilon^*)$ (defined in Lemma \ref{lulem9}). Then $\text{cat}_{\mathcal{A}_\la}(\mathcal{A}_\la)\geq \text{cat}_\Om(\Om)$
\end{Lemma}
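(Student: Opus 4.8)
The strategy is the classical Lusternik--Schnirelman category argument via a pair of continuous maps whose composition is homotopic to a homeomorphism on a set equivalent to $\Om$. Concretely, I would exhibit the composition
\[
\Om_\de^- \xrightarrow{\ \phi_\la\ } \mathcal{A}_\la \xrightarrow{\ \beta\ } \Om_\de^+,
\]
where $\phi_\la$ is the map defined in \eqref{lu6} and $\beta$ is the barycenter map \eqref{lu16}. The first observation, already recorded in the text, is that $\beta\circ\phi_\la = \mathrm{id}_{\Om_\de^-}$ because $u_\la^{B_\de}$ is radial, so the composition $\Om_\de^- \to \Om_\de^+$ is exactly the inclusion $\iota$. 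Moreover $\phi_\la$ is well defined (its image lies in $\mathcal{A}_\la$, since $J_\la(\phi_\la(z)) = J_\la^{B_\de}(u_\la^{B_\de})$ by the translation invariance of the functional on $B_\de(z)\subset\Om_\de^+\setminus$, and one checks $\phi_\la(z)\in N_\la^\Om$), continuous in $z$, and $\beta$ is continuous on $\mathcal{A}_\la$ by Lemma \ref{lulem9} (which guarantees $\beta(\mathcal{A}_\la)\subset\Om_\de^+$ for $\la<\Upsilon^*$, so the composition even makes sense).

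The key general fact I would invoke is the following standard property of category: if $X \xrightarrow{f} Y \xrightarrow{g} Z$ are continuous and $g\circ f$ is homotopic to the inclusion of a space into a space homotopically equivalent to the others in the relevant sense, then $\mathrm{cat}_Y(Y) \ge \mathrm{cat}_Z(f(X))$ — more precisely, if $h\colon X\to Z$ and $h = g\circ f$ with $h$ homotopic (in $Z$) to a map inducing the identity on category, then $\mathrm{cat}_X(X) \le \mathrm{cat}_Y(Y)$. Here I apply it with $X=\Om_\de^-$, $Y=\mathcal{A}_\la$, $Z=\Om_\de^+$: since $\Om_\de^-$, $\Om$, $\Om_\de^+$ are all homotopically equivalent (by the choice of $\de$ made just before \eqref{lu5}), we have $\mathrm{cat}_{\Om_\de^+}(\Om_\de^-) = \mathrm{cat}_\Om(\Om)$, and the inclusion $\iota\colon\Om_\de^-\hookrightarrow\Om_\de^+$ realizes this. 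If $\mathcal{A}_\la = \bigcup_{j=1}^k A_j$ with each $A_j$ closed and contractible in $\mathcal{A}_\la$, then $\phi_\la^{-1}(A_j)$ covers $\Om_\de^-$ by closed sets, and $\beta|_{A_j}\circ$ (the contraction of $A_j$ in $\mathcal{A}_\la$) $\circ\,\phi_\la$ contracts each $\phi_\la^{-1}(A_j)$ inside $\Om_\de^+$; hence $\mathrm{cat}_{\Om_\de^+}(\Om_\de^-)\le k$, giving $\mathrm{cat}_\Om(\Om) \le \mathrm{cat}_{\mathcal{A}_\la}(\mathcal{A}_\la)$.

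The main technical point to be careful about — and the step I expect to require the most attention — is the continuity of $\phi_\la\colon \Om_\de^- \to \mathcal{A}_\la$ in the $H_0^1(\Om)$-topology, since $\phi_\la(z)$ is $u_\la^{B_\de}$ translated to be centered at $z$ and then extended by zero; one must verify that $z\mapsto \phi_\la(z)$ is continuous as a map into $H_0^1(\Om)$ (this uses continuity of translations in $H^1(\R^N)$ together with the fact that $B_\de(z)\subset\Om$ for $z$ near $\overline\Om$, after possibly shrinking $\de$), and that indeed $\phi_\la(z)\in\mathcal{A}_\la$, i.e. $J_\la(\phi_\la(z))\le J_\la^{B_\de}(u_\la^{B_\de})$ and $\langle J_\la'(\phi_\la(z)),\phi_\la(z)\rangle = 0$, both of which follow from translation invariance of the Dirichlet integral and of the Choquard term on $\R^N$. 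I would also spell out that $\mathcal{A}_\la$ is a well-defined subset of $N_\la^\Om$ for which the statement is meaningful. Granting Lemma \ref{lulem9} and the homotopy equivalences, the rest is the formal category inequality above. \QED
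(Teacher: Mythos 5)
Your argument is correct and is precisely the classical barycenter/category argument of Benci--Cerami and Alves--Ding: the paper's own ``proof'' is a one-line citation to \cite[Lemma 4.3]{alves}, whose content is exactly the chain $\Om_\de^-\xrightarrow{\phi_\la}\mathcal{A}_\la\xrightarrow{\beta}\Om_\de^+$ with $\beta\circ\phi_\la=\mathrm{id}$ and the pullback of a categorical covering of $\mathcal{A}_\la$ to one of $\Om_\de^-$ in $\Om_\de^+$. So you have simply written out the details the paper leaves to the reference; the approach is the same.
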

\begin{proof}
	The proof can be done by using the same assertions  as in \cite[Lemma 4.3]{alves}.\QED		
		\end{proof}
	Next we  need following  lemma in order to proof Theorem \ref{luthm1}. 
	\begin{Lemma}\label{lulem11}\cite{ambrosetti}
		Suppose that $X$ is a Hilbert manifold and $F \in C^1(X,\mathbb{R})$ . Assume that there are $c_1 \in \mathbb{R}$ and  $k \in  \mathbb{N}$,
		such that
		\begin{enumerate}
			\item $F$ satisfies the Palais-Smale condition for energy level $c\leq c_1$;
			\item  $\text{Cat}(\{x\in X\; |\; F(x)\leq c_1 \})\geq k$.
		\end{enumerate}
		Then $F$ has at least $k$ critical points in  $\{x\in X\; |\; F(x)\leq c_1 \}$.
	\end{Lemma}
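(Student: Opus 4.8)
\emph{Proof strategy.} The plan is to run the classical Lusternik--Schnirelman minimax scheme on the sublevel set $F^{c_1}:=\{x\in X:\ F(x)\le c_1\}$, assuming in addition (as holds in the application to Theorem \ref{luthm1}, where $F=J_\la|_{N_\la^\Om}$ is bounded below by Lemma \ref{lulem8}) that $F$ is bounded from below. First I would introduce, for $j=1,\dots,k$, the minimax levels
\[
b_j:=\inf\bigl\{\,c\in\mathbb{R}\ :\ \mathrm{cat}_X(F^c)\ge j\,\bigr\},\qquad F^c:=\{x\in X:\ F(x)\le c\}.
\]
By monotonicity of the Lusternik--Schnirelman category under inclusion one has $b_1\le b_2\le\cdots\le b_k$, and hypothesis (2) says precisely that $c_1$ belongs to the set defining $b_k$, so $b_k\le c_1$. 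Hence every $b_j$ is a finite real number lying in $(-\infty,c_1]$, and by hypothesis (1) the functional $F$ satisfies $(PS)_{b_j}$ for each $j$. It then suffices to show that each $b_j$ is a critical value of $F$, while keeping track of the multiplicities that arise when several of the $b_j$ coincide.

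The heart of the matter is the claim that if $b_j=b_{j+1}=\cdots=b_{j+p}=:c$ for some $p\ge 0$, then the critical set $K_c:=\{x\in X:\ F(x)=c,\ F'(x)=0\}$ satisfies $\mathrm{cat}_X(K_c)\ge p+1$. I would prove this by contradiction: suppose $\mathrm{cat}_X(K_c)\le p$ (the case $K_c=\emptyset$, $p=0$, being included). By $(PS)_c$ the set $K_c$ is compact, and since a Hilbert manifold is an absolute neighbourhood retract, $K_c$ admits a closed neighbourhood $N$ with $\mathrm{cat}_X(N)=\mathrm{cat}_X(K_c)\le p$. The deformation lemma --- available precisely because $(PS)_c$ holds --- then yields $\e>0$ and $\eta\in C([0,1]\times X,X)$ with $\eta(0,\cdot)=\mathrm{id}_X$ and $\eta\bigl(1,\,F^{c+\e}\setminus\mathrm{int}\,N\bigr)\subset F^{c-\e}$. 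Using in turn the definition of $b_{j+p}$ (which gives $\mathrm{cat}_X(F^{c+\e})\ge j+p$ since $c+\e>b_{j+p}$), the sub-additivity and monotonicity of the category, the deformation property $\mathrm{cat}_X\bigl(F^{c+\e}\setminus\mathrm{int}\,N\bigr)\le\mathrm{cat}_X(F^{c-\e})$, and the definition of $b_j$ (which gives $\mathrm{cat}_X(F^{c-\e})\le j-1$ since $c-\e<b_j$), one obtains
\[
j+p\ \le\ \mathrm{cat}_X(F^{c+\e})\ \le\ \mathrm{cat}_X\bigl(F^{c+\e}\setminus\mathrm{int}\,N\bigr)+\mathrm{cat}_X(N)\ \le\ (j-1)+p ,
\]
which is absurd. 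Hence $\mathrm{cat}_X(K_c)\ge p+1$; taking $p=0$ shows in passing that every $b_j$ is a critical value of $F$.

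To finish I would simply count. Let $d_1<\cdots<d_r$ be the distinct numbers occurring in the list $b_1\le\cdots\le b_k$, and let $p_i+1$ be the number of indices $j$ for which $b_j=d_i$, so that $\sum_{i=1}^r(p_i+1)=k$. The claim gives $\mathrm{cat}_X(K_{d_i})\ge p_i+1$; since any set of $m$ points of $X$ has category at most $m$ (each singleton is trivially contractible in $X$, so the $m$ singletons form an admissible covering), the set $K_{d_i}$ must contain at least $p_i+1$ distinct points, and is in fact infinite whenever $p_i\ge1$. As the $K_{d_i}$ sit at pairwise distinct levels they are disjoint, and each $d_i\le c_1$; therefore $F$ possesses at least $\sum_{i=1}^r(p_i+1)=k$ critical points, all lying in $\{x\in X:\ F(x)\le c_1\}$.

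The step I expect to demand the most care is the deformation lemma: for a merely $C^1$ functional one must build a pseudo-gradient vector field on the set of regular points and flow along it, which forces one to work on a sufficiently regular manifold (in the application the Nehari manifold $N_\la^\Om$, where this construction is standard), and one also needs what is sometimes called the continuity of the category --- the existence of a neighbourhood $N$ of the compact set $K_c$ with $\mathrm{cat}_X(N)=\mathrm{cat}_X(K_c)$ --- which is exactly where the absolute-neighbourhood-retract property of $X$ enters. A minor technical point is that the top level $b_k$ may equal $c_1$; in the setting of Theorem \ref{luthm1} this causes no trouble because, by \eqref{lu5} and Lemma \ref{lulem14}, $(PS)_c$ in fact holds for all $c$ in a whole neighbourhood of $c_1=J_\la^{B_\de}(u_\la^{B_\de})$, so the deformation invoked at level $b_k$ is legitimate.
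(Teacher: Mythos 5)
The paper offers no proof of this lemma at all: it is quoted verbatim from Ambrosetti's memoir \cite{ambrosetti} and used as a black box in the proof of Theorem \ref{luthm1}. Your reconstruction is the standard Lusternik--Schnirelman minimax scheme (the same one underlying the cited result), and it is correct: the levels $b_j$ are well defined and $\le c_1$, the multiplicity estimate $\mathrm{cat}_X(K_c)\ge p+1$ at a repeated level follows from the quantitative deformation lemma together with subadditivity, monotonicity and the continuity of the category on an ANR, and the final count via $\mathrm{cat}_X(\{m \text{ points}\})\le m$ delivers $k$ critical points in the sublevel set. Two remarks. First, you rightly observe that the statement as printed is incomplete: without $F$ bounded below (and $X$ complete, so that the pseudo-gradient flow is global) the infima $b_j$ need not be finite and the deformation need not exist; these hypotheses are part of Ambrosetti's theorem and do hold in the application, since $J_\la$ is bounded below on $N_\la^{\Om}$ by Lemma \ref{lulem8} and $N_\la^{\Om}$ is a complete $C^1$ submanifold of $H_0^1(\Om)$. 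Second, your worry about the top level $b_k=c_1$ is not really necessary: the quantitative deformation lemma only requires $(PS)_{c}$ at the level $c$ itself to produce the band $[c-\e,c+\e]$, and the inequality $\mathrm{cat}_X(F^{c+\e})\ge j+p$ follows from monotonicity regardless of whether $c+\e$ exceeds $c_1$; still, the extra room you point to in the application does no harm. No gap; your argument is a faithful and complete proof of the cited lemma.
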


\noi \textbf{Proof of Theorem \ref{luthm1} :} By Lemma \ref{lulem14}, $J_\la$ satisfies $(PS)_c$ condition on $N_\la^{\Om}$ for any $c<\frac{N-\mu+2}{2(2N-\mu)}S_{H,L}^{\frac{2N-\mu}{N-\mu+2}}$, provided $\la\in (0,\la_1)$. If condition $\mathcal{(Q)}$ holds then  from Lemma \ref{lulem4},  $0<\theta_\la< \frac{N-\mu+2}{2(2N-\mu)}S_{H,L}^{\frac{2N-\mu}{N-\mu+2}}$. Hence if condition $\mathcal{(Q)}$ holds then Lemmas \ref{lulem10} and \ref{lulem11}, we have at least $\text{cat}_\Om(\Om)$ critical points of $J_\la$ restricted to $N_\la$ for any $\la\in (0,\La^*)$, where
\begin{align*}
\La^*=  \min\{\la_1,\Upsilon^* \}, 
\end{align*}
Thus using Lemma \ref{lulem12}, we obtain $J_\la$ has at least $\text{cat}_\Om(\Om)$ critical points on $H_0^1(\Om)$. From \cite[Lemma 4.4]{yangjmaa}, we have at least $\text{cat}_\Om(\Om)$  positive solutions of problem $(P_\la)$.\QED

\appendix

\noindent

\section{\!\!\!\!\!\!ppendix}
\label{A}
Here we will proof behavior of the optimizing sequence of $S_{H,L}$.  For the local case, Proposition \ref{luprop2} has been proved in \cite{struwe2} and \cite{willem}. Combining
the ideas of  \cite{gaoyang} and \cite{willem},  one expects the Proposition \ref{luprop2}  to hold for critical Choquard case, but as best of our knowledge this type of result  has not been
proved exclusively anywhere. For $N=3$, Proposition \ref{luprop2} has been proved in \cite{Sililiano}.
\begin{Proposition}\label{luprop2}
	Let $\{u_n\}$ be a sequence in $H_0^1(\Om)$ such that
	\begin{align*}
	\int_{\Om}\int_{\Om}\frac{|u_n(x)|^{2^{*}_{\mu}}|u_n(y)|^{2^{*}_{\mu}}}{|x-y|^{\mu}}~dydx=1 \text{ and  } \|u_n\|^2\ra S_{H,L} \text{ as } n \ra  \infty.\end{align*}
	Then, there exists a  sequences $z_n\in \mathbb{R}^N$ and $\al_n\in \mathbb{R}^+$ such that  the sequence 
	\begin{align*}
	v_n (x)= \al_n^{\frac{N-2}{2}} u_n(\al_nx+z_n) 
	\end{align*}
	have a convergent subsequence, still denoted by $v_n$, such that $v_n \ra v \not \equiv 0 $ in $D^{1,2}(\mathbb{R}^N),\; z_n \ra z \in \overline{\Om}$, and  $\al_n  \ra 0$ as $n \ra \infty$. In particular, $v$ is a minimizer of $S_{H,L}$.  
\end{Proposition}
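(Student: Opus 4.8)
The plan is to run a concentration--compactness argument, in the spirit of the analysis of the Sobolev quotient in \cite{struwe2, willem} adapted to the nonlocal Hardy--Littlewood--Sobolev nonlinearity as in \cite{gaoyang}. Extending each $u_n$ by zero we may regard $u_n\in D^{1,2}(\R^N)$. Introduce the \emph{Choquard concentration function}
\[ \mc C_n(r):=\sup_{z\in\R^N}\ \int_{B_r(z)}\int_{B_r(z)}\frac{|u_n(x)|^{2^*_{\mu}}\,|u_n(y)|^{2^*_{\mu}}}{|x-y|^{\mu}}\ dx\,dy , \]
which is continuous and nondecreasing in $r$, vanishes as $r\to0^+$, and increases to $\|u_n\|_{NL}^{2\cdot 2^*_{\mu}}=1$ as $r\to\infty$. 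Since the transformation $u\mapsto\al^{(N-2)/2}u(\al\,\cdot\,+z)$ leaves $\|\cdot\|$ and $\|\cdot\|_{NL}$ unchanged, I choose $\al_n>0$ with $\mc C_n(\al_n)=\tfrac12$ (possible by the intermediate value theorem) and $z_n\in\R^N$ realizing the corresponding supremum up to an error $\tfrac1n$, so that the rescaled functions $v_n(x):=\al_n^{(N-2)/2}u_n(\al_n x+z_n)$ satisfy, writing $E_n(A):=\int_A\int_A\frac{|v_n(x)|^{2^*_{\mu}}|v_n(y)|^{2^*_{\mu}}}{|x-y|^{\mu}}\,dx\,dy$,
\[ \sup_{z\in\R^N}E_n(B_1(z))=\tfrac12 ,\qquad E_n(B_1(0))\ \ge\ \tfrac12-\tfrac1n . \]
Then $(v_n)$ is bounded in $D^{1,2}(\R^N)$ (indeed $\|v_n\|^2=\|u_n\|^2\to S_{H,L}$) and $E_n(\R^N)=\|v_n\|_{NL}^{2\cdot 2^*_{\mu}}=1$; up to a subsequence $v_n\rp v$ in $D^{1,2}(\R^N)$ and $v_n\ra v$ a.e.\ and in $L^p_{\mathrm{loc}}(\R^N)$ for every $p\in[1,2^*)$.

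The technical heart is the concentration--compactness principle for the HLS nonlinearity, in the joint form for $|\na v_n|^2$ and for the Choquard energy density $\Phi_n:=\big(|\cdot|^{-\mu}*|v_n|^{2^*_{\mu}}\big)|v_n|^{2^*_{\mu}}$, where $\int_{\R^N}\Phi_n=E_n(\R^N)=1$: along a further subsequence $|\na v_n|^2\rp\mu$ and $\Phi_n\rp\sigma$ weakly-$*$ as finite measures, and there are an at most countable set $\{x_j\}\subset\R^N$ and numbers $\mu_j,\sigma_j\ge0$, together with $\mu_\infty,\sigma_\infty\ge0$ describing loss of mass at infinity, such that
\[ \mu\ \ge\ |\na v|^2+\sum_j\mu_j\,\delta_{x_j},\qquad \mu_j\ \ge\ S_{H,L}\,\sigma_j^{1/2^*_{\mu}},\qquad \mu_\infty\ \ge\ S_{H,L}\,\sigma_\infty^{1/2^*_{\mu}}, \]
\[ \|v\|_{NL}^{2\cdot 2^*_{\mu}}+\sum_j\sigma_j+\sigma_\infty\ =\ 1,\qquad \mu(\R^N)+\mu_\infty\ =\ \lim_n\|v_n\|^2\ =\ S_{H,L}, \]
the first identity using a nonlocal Brezis--Lieb type splitting together with the fact that cross pairings between profiles concentrated near distinct points, or near a point and near infinity, are asymptotically negligible (the mass $\int|\cdot|^{2^*_{\mu}}$ of a concentrating bump tends to $0$, while the kernel stays bounded off the diagonal). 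Adding the bound $\|v\|^2\ge S_{H,L}\big(\|v\|_{NL}^{2\cdot 2^*_{\mu}}\big)^{1/2^*_{\mu}}$, which is just \eqref{nh5}, I get
\[ S_{H,L}\ =\ \mu(\R^N)+\mu_\infty\ \ge\ \|v\|^2+\sum_j\mu_j+\mu_\infty\ \ge\ S_{H,L}\Big[\big(\|v\|_{NL}^{2\cdot 2^*_{\mu}}\big)^{1/2^*_{\mu}}+\sum_j\sigma_j^{1/2^*_{\mu}}+\sigma_\infty^{1/2^*_{\mu}}\Big]. \]
Since $2^*_{\mu}>1$, the map $t\mapsto t^{1/2^*_{\mu}}$ dominates $t$ on $[0,1]$ and strictly so on $(0,1)$; as the nonnegative masses $\|v\|_{NL}^{2\cdot 2^*_{\mu}},\sigma_1,\sigma_2,\dots$ and $\sigma_\infty$ sum to $1$, the bracket is $\ge1$, and the whole chain forces exactly one of them to equal $1$ and all the rest to vanish.

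Next I would rule out the two bad alternatives. If $\sigma_{j_0}=1$, then $\sigma=\delta_{x_{j_0}}$, so $\int_{\R^N\setminus B_r(x_{j_0})}\Phi_n\to0$ for every $r>0$; since $E_n(B_r(x_{j_0}))\ge 1-3\int_{\R^N\setminus B_r(x_{j_0})}\Phi_n$, we get $\sup_zE_n(B_1(z))\ge E_n(B_1(x_{j_0}))\to1$, contradicting $\sup_zE_n(B_1(z))=\tfrac12$. If $\sigma_\infty=1$, then $\sigma=0$, so $\int_{B_R(0)}\Phi_n\to0$ for every $R$, and since $E_n(B_1(0))\le\int_{B_1(0)}\Phi_n$ this contradicts $E_n(B_1(0))\to\tfrac12$. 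Hence $\|v\|_{NL}^{2\cdot 2^*_{\mu}}=1$; in particular $v\not\equiv0$. Since then $\|v\|^2\ge S_{H,L}$ by \eqref{nh5}, the displayed chain collapses to equalities, forcing $\mu_j=\sigma_j=\mu_\infty=\sigma_\infty=0$ and $\|v\|^2=S_{H,L}$. The Hilbert--space identity $\|v_n\|^2=\|v\|^2+\|v_n-v\|^2+o_n(1)$ then gives $\|v_n-v\|\to0$, so $v_n\ra v$ in $D^{1,2}(\R^N)$; as $\|v\|^2=S_{H,L}$ and $\|v\|_{NL}^{2\cdot 2^*_{\mu}}=1$, $v$ attains the infimum in \eqref{nh5}, i.e.\ it is a minimizer of $S_{H,L}$.

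Finally I must locate the parameters. By Lemma \ref{lulem13} the minimizer $v$ equals $C\big(b/(b^2+|x-a|^2)\big)^{(N-2)/2}$ up to sign, hence $v$ vanishes nowhere and $\mathrm{supp}\,v=\R^N$; on the other hand each $v_n$ is supported in the closure of $\Om_n:=(\Om-z_n)/\al_n$. From $E_n(B_1(0))\to\tfrac12>0$ we get that $v_n$ is not a.e.\ zero on $B_1(0)$ for large $n$, so $\Om_n\cap B_1(0)\neq\emptyset$, whence $z_n\in\Om+\al_n B_1(0)$. If $(\al_n)$ did not tend to $0$, then along a subsequence $\Om_n$ would lie in a fixed ball --- when $\al_n$ stays bounded because $\Om$, hence $z_n$, is bounded; when $\al_n\to\infty$ because $\mathrm{diam}\,\Om_n\to0$ while $\Om_n$ meets $B_1(0)$ --- and in the limit $v$ would be supported in a bounded set, which is impossible. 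Hence $\al_n\to0$, and then $z_n\in\Om+\al_n B_1(0)$ gives $\mathrm{dist}(z_n,\Om)\to0$, so a subsequence of $(z_n)$ converges to some $z\in\overline{\Om}$. The step I expect to be the real obstacle is the concentration--compactness principle quoted above --- establishing the atomic decomposition of $\sigma$ with the sharp constant $S_{H,L}$ bounding $\mu_j$ from below, and showing that concentration or escape at infinity of the \emph{nonlocal} Choquard energy forces the corresponding behaviour of the localized energies $E_n(\cdot)$ while the cross pairings between well-separated profiles stay negligible --- which is exactly where the arguments of \cite{gaoyang} for the HLS term and of \cite{willem} for the quotient must be carefully merged.
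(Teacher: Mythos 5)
Your argument is correct and follows the same overall strategy as the paper's proof: a L\'evy-type concentration function normalized to the value $\tfrac12$ to fix the scales $\al_n$ and centres $z_n$, the concentration--compactness principle of \cite{gaoyang} for the Hardy--Littlewood--Sobolev term (which both you and the paper treat as the imported black box), the convexity-of-$t\mapsto t^{1/2^*_{\mu}}$ argument forcing the unit of nonlocal mass into exactly one of the three channels (weak limit, atoms, infinity), and the normalization $=\tfrac12$ to exclude the atomic and at-infinity channels. The genuine divergence is in the endgame. To prove $\al_n\ra 0$ and $z_n\ra z\in\overline{\Om}$, the paper rules out $\al_n\ra\infty$ via the scaling identity $\int_{\Om_n}|v_n|^2=\al_n^{-2}\int_{\Om}|u_n|^2$ combined with Fatou's lemma over the varying domains $\Om_n$, and rules out $\al_n\ra\al_0>0$ by a separate limiting-domain argument; you instead note that each $v_n$ is supported in $\overline{(\Om-z_n)/\al_n}$, that these sets must meet $B_1(0)$, and that the limit $v$, being an explicit Aubin--Talenti-type profile by Lemma \ref{lulem13}, vanishes nowhere, so the rescaled domains cannot stay inside a fixed ball. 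Your version is cleaner and sidesteps the slightly delicate use of Fatou on a moving domain (note that $v\in D^{1,2}(\R^N)$ need not be globally in $L^2$). Two small points to make explicit in a write-up: since you only choose $z_n$ attaining the supremum up to $1/n$, carry the lower bound $E_n(B_1(0))\ge\tfrac12-\tfrac1n$ consistently through the vanishing-at-infinity exclusion (it suffices); and state the at-infinity inequality $\mu_\infty\ge S_{H,L}\,\sigma_\infty^{1/2^*_{\mu}}$ in exactly the form proved in \cite[Lemma 2.5]{gaoyang}, as the exponent there is written differently and is what makes the bracket-$\ge 1$ step legitimate for all $0<\mu<N$.
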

\begin{proof}
	Define the L\'evy concentration function 
	\begin{align*}
	Q_n(\la):= \sup_{z \in \mathbb{R}^N} \int_{B(z,\la)}(|x|^{-\mu}* |u_n|^{2^*_{\mu}}) |u_n|^{2^*_{\mu}}~dx . 
	\end{align*}
	It is easy to see that  for each $n, \; \ds \lim_{\la \ra 0^+}	Q_n(\la)=0$ and $\ds \lim_{\la \ra \infty}Q_n(\la)=1$, there exists $\al_n>0$ such that $Q_n(\al_n)=\frac{1}{2}$. Also, there exist $z_n\in \mathbb{R}^N$ such that 
	\begin{align*}
	\int_{B(z_n,\al_n)}(|x|^{-\mu}* |u_n|^{2^*_{\mu}}) |u_n|^{2^*_{\mu}}~dx = Q_n(\al_n)=\frac{1}{2}.
	\end{align*}
	Now define the function $v_n(x)= \al_n^{\frac{N-2}{2}} u_n(\al_nx+z_n)$ then 
	\begin{align}\label{nh22}
	& \int_{\mathbb{R}^N}\int_{\mathbb{R}^N}\frac{|v_n(x)|^{2^{*}_{\mu}}|v_n(y)|^{2^{*}_{\mu}}}{|x-y|^{\mu}}~dxdy=1,\;   \|\na v_n\|^2_{L^2}\ra S_{H,L} \text{ as } n \ra  \infty \text{ and  } \nonumber \\
	& \frac{1}{2}= \sup_{z \in \mathbb{R}^N} \int_{B(z,1)}(|x|^{-\mu}* |v_n|^{2^*_{\mu}}) |v_n|^{2^*_{\mu}}~dx=  \int_{B(0,1)}(|x|^{-\mu}* |v_n|^{2^*_{\mu}}) |v_n|^{2^*_{\mu}}~dx. 
	\end{align}
	It implies $\{v_n\} $ is a bounded sequence in $D^{1,2}(\mathbb{R}^N)$. Therefore, there exist a  subsequence, still denoted by  $\{v_n\}$ such that $v_n \rp v $  weakly in $ D^{1,2}(\mathbb{R}^N)$, for some   $v\in D^{1,2}(\mathbb{R}^N)$.  Then we can  assume that  there exist $\omega,\; \tau,\; \nu$ such that
	\begin{align*}
	v_n \ra v \text{ a.e on }  \mathbb{R}^N , \; 	|\na v_n|^2 \rp \omega,\; |v_n|^{2^*}\rp \tau ,\text{ and } (|x|^{-\mu}* |v_n|^{2^*_{\mu}}) |v_n|^{2^*_{\mu}} \rp \nu \text{ in the sense of measure}. 
	\end{align*}
	Now using the Brezis-Leib lemma in sense of measure, we have
	\begin{align*}
	& |\na (v_n-v)|^2 \rp \varpi:= \omega-|\na v|^2,\; |v_n-v|^{2^*}\rp \chi:= \tau- |v|^{2^*} ,\text{ and }\\
	&  (|x|^{-\mu}* |v_n-v|^{2^*_{\mu}}) |v_n-v|^{2^*_{\mu}} \rp\kappa:=  \nu - (|x|^{-\mu}* |v|^{2^*_{\mu}}) |v|^{2^*_{\mu}} . 
	\end{align*}
	Moreover, if we define 
	\begin{align*}
	& 	\omega_{\infty}:= \lim_{R\ra\infty}\limsup_{n\ra \infty} \int_{|x|>R}|\na v_n|^2~dx ,\\ & 
	\tau_{\infty}:= \lim_{R\ra\infty}\limsup_{n\ra \infty} \int_{|x|>R}|v_n|^{2^*}~dx,  \quad \text{ and } \\ &
	\nu_{\infty}:= \lim_{R\ra\infty}\limsup_{n\ra \infty}\int_{|x|>R}(|x|^{-\mu}* |v_n|^{2^*_{\mu}}) |v_n|^{2^*_{\mu}} ~dx  
	\end{align*}
	then by using concentration-compactness principle \cite[Lemma 2.5]{gaoyang}, we deduce that 
	\begin{align*}
	& \limsup_{n\ra \infty}\|\na v_n\|^2_{L^2}= \int_{\mathbb{R}^N} d \omega+ \omega_{\infty},\quad \; \limsup_{n\ra \infty}\|v_n\|^{2^*}_{L^{2^*}}= \int_{\mathbb{R}^N} d \tau+ \tau_{\infty},\\
	& \hspace{3cm}\limsup_{n\ra \infty}\|v_n\|^{2\cdot 2^*_{\mu}}_{NL}= \int_{\mathbb{R}^N} d \nu+ \nu_{\infty} \text{ and }\\
	& C(N, \mu)^{-\frac{2N}{2N-\mu}}\nu_{\infty}^{\frac{2N}{2N-\mu}} \leq \tau_{\infty}\left(\int_{\mathbb{R}^N} d \tau+ \tau_{\infty} \right),\;  S_{H,L}^2 \nu_{\infty}^{\frac{2}{2^*_{\mu}}}\leq \omega_{\infty}\left(\int_{\mathbb{R}^N} d \omega+ \omega_{\infty} \right).
	\end{align*}
	Also, if $v=0$ and $\ds\int_{\mathbb{R}^N} d \omega= S_{H,L}\left(\ds\int_{\mathbb{R}^N} d \nu\right)^{\frac{1}{2^*_{\mu}}}$ then $\nu$ is concentrated at a single point. By using  \cite[(2.11)]{gaoyang},  we have 
	\begin{align}\label{lu9}
	S_{H,L}\left(\int_{\mathbb{R}^N} d \kappa \right)^{\frac{1}{2^*_{\mu}}} \leq \int_{\mathbb{R}^N} d \varpi.
	\end{align} 
	It implies 
	\begin{equation}
	\begin{aligned}\label{lu10}
	& S_{H,L}=\limsup_{n\ra \infty}\|\na v_n\|^2_{L^2}= \int_{\mathbb{R}^N} d \varpi + \|\na v\|^2_{L^2}+ \omega_{\infty} ,\; \\
	& 1= \limsup_{n\ra \infty}\|v_n\|^{2\cdot 2^*_{\mu}}_{NL}= \int_{\mathbb{R}^N} d \kappa + \|v\|^{2\cdot 2^*_{\mu}}_{NL}+\nu_{\infty} \\
	& S_{H,L} \nu_{\infty}^{\frac{2}{2^*_{\mu}}}\leq \omega_{\infty}. 
	\end{aligned}
	\end{equation}
	Using the definition of $S_{H,L}$, \eqref{lu9} and \eqref{lu10}, we obtain
	\begin{align*}
	& S_{H,L}\geq S_{H,L}\left( \left(\|v\|^{2\cdot 2^*_{\mu}}_{NL}\right)^{\frac{1}{2^*_{\mu}}} + \left(\int_{\mathbb{R}^N} d \kappa \right)^{\frac{1}{2^*_{\mu}}} +\nu_{\infty}^{\frac{2}{2^*_{\mu}}} \right), \text{ that is, }\\
	& \int_{\mathbb{R}^N} d \kappa + \|v\|^{2\cdot 2^*_{\mu}}_{NL}+\nu_{\infty}  \geq  \left(\|v\|^{2\cdot 2^*_{\mu}}_{NL}\right)^{\frac{1}{2^*_{\mu}}} + \left(\int_{\mathbb{R}^N} d \kappa \right)^{\frac{1}{2^*_{\mu}}} +\nu_{\infty}^{\frac{2}{2^*_{\mu}}} 
	\end{align*}
	Thanks to the fact that  $\|v\|_{NL},\; \ds \int_{\mathbb{R}^N} d \kappa,\; \nu_{\infty} $ are  non-negative, we get  $\|v\|_{NL},\; \ds \int_{\mathbb{R}^N} d \kappa,\; \nu_{\infty} $ are equal to either 1 or 0.  Using \eqref{nh22}, we have $\nu_{\infty}\leq \frac{1}{2}$. It implies $\nu_{\infty}=0$. Now if $\ds \int_{\mathbb{R}^N} d \kappa=1$ then $\|v\|_{NL}=0$ that is, $v=0$ a.e. on $\mathbb{R}^N$. Therefore, $S_{H,L}= \ds \int_{\mathbb{R}^N} d \varpi + \omega_{\infty}\geq \ds \int_{\mathbb{R}^N} d \varpi$. Hence,
	\begin{align}\label{lu11}
	S_{H,L}\left(\int_{\mathbb{R}^N} d \kappa \right)^{\frac{1}{2^*_{\mu}}} \geq \int_{\mathbb{R}^N} d \varpi.
	\end{align} 
	Coupling \eqref{lu9}, \eqref{lu11} with the fact that $v=0$ a.e on $\mathbb{R}^N$,  we have $\nu $ is concentrated at a single point $z_0$. From \eqref{nh22}, we get 
	\begin{align*}
	\frac{1}{2}= \sup_{z \in \mathbb{R}^N} \int_{B(z,1)}(|x|^{-\mu}* |v_n|^{2^*_{\mu}}) |v_n|^{2^*_{\mu}}~dx \geq \int_{B(z_0,1)}(|x|^{-\mu}* |v_n|^{2^*_{\mu}}) |v_n|^{2^*_{\mu}}~dx\ra \int_{\mathbb{R}^N} d \kappa =1,
	\end{align*}	
	which is not possible. Hence, $\|v\|^{2\cdot 2^*_{\mu}}_{NL}=1$. Also, $S_{H,L}=\ds\lim_{n\ra \infty}\|\na v_n\|^2_{L^2}=\|\na v\|^2_{L^2}$. In particular, $v$ is a minimizer of $S_{H,L}$. From  \cite[Lemma 1.2]{yang}, we know  $S_{H,L}$  is achieved if  and only if 
	\begin{align*}
	u=C\left(\frac{b}{b^2+|x-a|^2}\right)^{\frac{N-2}{2}}
	\end{align*} 
	where $C>0$ is a fixed constant, $a\in \mathbb{R}^N$ and $b\in (0,\infty)$ are parameters. It implies $v= u=C\left(\frac{b}{b^2+|x-a|^2}\right)^{\frac{N-2}{2}}$. In particular, $v\not \equiv 0$.  Now, we will prove that $\al_n \ra 0$ and $z_n \ra z_0 \in \overline{\Om}$. Let if possible $\al_n \ra \infty $. Since $\{u_n\}$ is a bounded sequence in $H_0^1(\Om)$,   $\{u_n\}$ is a bounded sequence in $L^2(\Om)$. Thus if we define $\Om_n= \ds \frac{\Om- z_n}{\al_n}$ then 
	\begin{align*}
	\int_{\Om_n} |v_n|^2~dx = \frac{1}{\al_n^2}\int_{\Om} |u_n|^2~dx \leq \frac{C}{\al_n^2}\ra 0.
	\end{align*}
	Contrary to this, by Fatou's Lemma we have $0= \ds \liminf_{n\ra \infty}\int_{\Om_n}|v_n|^2~dx \geq \int_{\Om_n} |v|^2~dx$. This means $v\equiv0$, which is not true. Hence $\{\al_n\}$ is bounded in $\mathbb{R}$ that is, there exists $\al_0 \in \mathbb{R}$ such that $\al_n\ra \al_0$ as $n\ra \infty$. If $z_n \ra  \infty$ then for  any $x \in \Om$	and  large $n$, $\al_nx+z_n \not \in \overline{\Om}$. Since $u_n\in H_0^1(\Om)$ then $u_n(\al_nx+z_n) =0$ for all $x\in \Om$, it yields a contradiction to the assumption 
	$\|u_n\|_{NL}^{2\cdot 2^*_{\mu}}=1$. Therefore, $z_n$ is bounded, it implies that $z_n\ra z_0$. Now suppose $\al_n \ra \al_0>0$ then $\Om_n\ra  \ds \frac{\Om- z_0}{\al_0} = \Om_0 \not = \mathbb{R}^N$. Hence 
	\begin{align*}
	\int_{\Om_0}\int_{\Om_0}\frac{|v_n(x)|^{2^{*}_{\mu}}|v_n(y)|^{2^{*}_{\mu}}}{|x-y|^{\mu}}~dxdy=1 \text{ and  } \int_{\Om_0}|v_n|^2~dx \ra \int_{\Om_0}|v|^2~dx= S_{H,L} \text{ as } n \ra  \infty.\end{align*}
	which is not true. Hence $\al_n \ra 0 $ as $n\ra \infty$. Finally, arguing by contradiction, we assume that
	\begin{align}\label{lu19}
	z_0 \not \in \overline{\Om} .
	\end{align}
	In view of the fact that  $\al_nx+z_n \ra z_0$ for all $x \in \Om$  as $n \ra \infty$. Now using \eqref{lu19} we have $\al_nx+z_n  \not \in \overline{\Om}$ for all $ x \in \Om $ and $n$ large enough. It implies that $u_n(\al_nx +z_n)=0 $ for $n$ large enough. This yields a  contradiction, therefore, $ z_0\in \overline{\Om}$.\QED
\end{proof}

\noi \textbf{Acknowledgment}
The author would like to thank Prof. K Sreenadh   for various discussion that greatly improved the manuscript. The author would like to thank the anonymous referee for valuable comments.

\end{document}